\newtheorem{theorem}{Theorem}
\newtheorem{lemma}[theorem]{Lemma}
\newtheorem{proposition}[theorem]{Proposition}
\theoremstyle{definition}
\newtheorem{remark}[theorem]{Remark}
\newtheorem{example}[theorem]{Example}
\newtheorem{definition}[theorem]{Definition}
\newcommand{\R}{{\mathbb R}}
\newcommand{\C}{{\mathbb C}}
\newcommand{\N}{{\mathbb N}}
\newcommand{\Z}{{\mathbb Z}}
\DeclareMathOperator{\im}{im}
\DeclareMathOperator{\rank}{rank}
\DeclareMathOperator{\diag}{diag}
 \newcommand{\dist}[0]{\mathrm{dist}}
\newcommand{\spt}[0]{\mathrm{spt}}
\DeclareMathAccent{\Circ}{\mathalpha}{operators}{"17}
\newcommand{\e}{{\bf e}}
\renewcommand{\i}[0]{\mathrm{i}}
\renewcommand{\Re}{\operatorname{Re}}
\renewcommand{\Im}{\mathrm{Im}}
\newcommand{\eps}{\varepsilon}
\begin{document}
\title{\vspace*{-10mm}
A remark on local activity and passivity}
\author{B.\ Garay%
\footnote{\tt garay@digitus.itk.ppke.hu\rm, Faculty of Information Technology and Bionics, P\'{a}zm\'{a}ny Catholic University Budapest, Hungary}
,
S.\ Siegmund%
\footnote{\tt stefan.siegmund@tu-dresden.de\rm, Institute for Analysis \& Center for Dynamics, Department of Mathematics, TU Dresden, Germany}
,
S.\ Trostorff%
\footnote{\tt sascha.trostorff@tu-dresden.de\rm, Institute for Analysis, Department of Mathematics, TU Dresden, Germany}
{} and
M.\ Waurick%
\footnote{\tt  m.waurick@bath.ac.uk\rm, Department of Mathematical Sciences, University of Bath, United Kingdom,
\tt marcus.waurick@tu-dresden.de\rm, Institute for Analysis, Department of Mathematics, TU Dresden, Germany}
}
\date{\today}
\maketitle
\begin{abstract}
We study local activity and its opposite, local passivity, for linear systems and show that generically an eigenvalue of the system matrix with positive real part implies local activity. If all state variables are port variables we prove that the system is locally active if and only if the system matrix is not dissipative. Local activity was suggested by Leon Chua as an indicator for the emergence of complexity of nonlinear systems. We propose an abstract scheme which indicates how local activity  could be applied to nonlinear systems and list open questions about possible consequences for complexity.
\end{abstract}
%
%
%

\section{Introduction}

There is a huge literature on notions of passivity for linear control systems
\begin{equation}\label{eq01}
   \dot x(t) = A x(t) + B u(t), 
   \qquad 
   y(t) = C x(t) + D u(t),
\end{equation}
with matrices $A \in \R^{n \times n}$, $B \in \R^{n \times m}$, $C \in \R^{k \times n}$, $D \in \R^{k \times m}$ (see e.g.\ \cite{TucsnakWeiss14}, \cite[Chapter 2]{BrogliatoEtal2007} and the references therein). The fundamental concept of passivity has been motivated by the study of linear electric circuits. It is derived from mathematical properties of a function of a complex variable, called \emph{positive real function}  (see \cite[Definition 4]{Chua05} or \cite[Definition 5.1]{Wing2008}). Its $n$-dimensional extension, called \emph{positive real impedance matrix} or \emph{positive real admittance matrix}, characterizes the impedance of arbitrary linear electrical circuits composed of resistors, inductors and capacitors. \eqref{eq01} is called \emph{impedance passive} if for every continuous input signal $u \colon \R_{\geq 0} \rightarrow \R^m$ and initial value $x_0 \in \R^n$ its differentiable solution $x \colon \R_{\geq 0} \rightarrow \R^n$ with $x(0) = x_0$ satisfies
\begin{equation*}
   \frac{d}{dt} \|x(t)\|^2 
   \leq
   2 \langle u(t), y(t) \rangle
   \quad (t \geq 0).
\end{equation*} 
The terminology dates back to Otto  Brune's PhD dissertation \cite{Brune1931a, Brune1931b} and the classic linear circuit theory of yore. Impedance passivity of \eqref{eq01} is equivalent to (see also 
Staffans \cite{Staffans02, Staffans03} for an infinite-dimensional analogue) 
\begin{equation}\label{eq02}
   \langle A x_0 + B u_0, x_0 \rangle 
   \leq
   \langle u_0, C x_0 + D u_0 \rangle
   \quad (x_0 \in \R^n, u_0 \in \R^m).
\end{equation}
Chua \cite[Definition 2]{Chua05} calls a linear system
\begin{equation}\label{eq03}
   \dot x(t) = A x(t) + P u(t)
\end{equation}
with $A \in \R^{n \times n}$ and a diagonal projection matrix $P \in \R^{n \times n}$, \emph{locally passive} if for all $T>0$ and continuous input signals $u \colon [0,T] \rightarrow \R^n$ the inequality $\int_0^T \langle x(t), Pu(t) \rangle dt \ge 0$ holds for the solution $x$ of \eqref{eq03} with $x(0)=0$. Using \eqref{eq02}, an extension of system \eqref{eq03} 
\begin{equation}\label{eq03ext}
   \dot x(t) = A x(t) + P u(t),
   \qquad
   y(t) = P x(t),
\end{equation}
is impedance passive if and only if $\langle A x_0 + P u_0, x_0 \rangle \leq \langle u_0, P x_0 \rangle$ which is equivalent to
\begin{equation}\label{eq04}
   \langle A x_0 , x_0 \rangle 
   \leq 0
   \quad (x_0 \in \R^n).
\end{equation}
If $P$ is the identity matrix, we show in Theorem \ref{thm:passivity} that condition \eqref{eq04} is also equivalent to system \eqref{eq03} being locally passive, i.e.\ impedance passivity for system \eqref{eq03ext} and local passivity for system \eqref{eq03} agree if $P = I$. However, to the best of our knowledge, if $P$ is not the identity then a characterization of local passivity via properties of $A$ and $P$ is still open. Nevertheless, we show in Proposition \ref{prop:dissipative}, in particular for diagonal projection matrices $P$, that the condition $\langle P A x_0, x_0 \rangle \leq 0$ for all $x_0 \in \R^n$ implies that system \eqref{eq03} is locally passive.

System \eqref{eq03} is called \emph{locally active} \cite[Definition 1]{Chua05}, if it is not locally passive. We show in Theorem \ref{thm:generic-local-activity} that generically instability of $A$ implies local activity of \eqref{eq03}. More precisely, we construct an open and dense subset of $\R^{n \times n}$ such that if a matrix $A$ in that set has an eigenvalue with positive real part then \eqref{eq03} is locally active. 

Local activity was introduced by Leon Chua for nonlinear systems to shed light on the emergence of complexity (see also \cite{MainzerChua13} and the many references therein). We agree with Chua who writes in \cite{Chua05} that ``many scientists have struggled to uncover the elusive origin of complexity, and its various conceptually equivalent jargons, such as emergence, self-organization, synergetics, nonequilibrium phase transitions, cooperative phenomena, etc.''. This makes it all the more gratifying if a testable mathematical condition like local activity \cite{Chua98, Chua05, DogaruChua98} is suggested as a concept which indicates the emergence of complexity. Local activity as a mathematical concept was proposed in \cite{Chua98} with a circuit-theoretic perspective and studied for discrete reaction-diffusion equations \cite{Chua05} and many other systems \cite{MainzerChua13}.

In this paper we take a mathematical perspective on the concept of local activity as defined in \cite[Definition 1]{Chua05} and discuss some of its properties in Section \ref{sec:local-activity}. In Section \ref{sec:nonlinear} we formulate an abstract scheme which indicates how the local activity for linear systems could be applied to nonlinear systems like the FitzHugh--Nagumo equation with dissipation \cite{ItohChua07} and a discrete reaction-diffusion equation \cite[Equation (1)]{Chua05} at a single equilibrium. We propose open questions on local activity and its consequences for complexity. Appendix \ref{sec:appendix} contains a genericity result, as well as a short proof in the Hilbert space setting of a characterization of local passivity by dissipativity in case the projection $P$ is the identity.

For an interval $I \subseteq \R$ and a Hilbert space $H$, we denote by $C(I, H)$ and $L^2(I,H)$ the spaces of continuous functions and square-integrable functions $f \colon I \rightarrow H$, respectively. $C_{c}^{\infty}(\mathbb{R})$ is the space of infinitely often differentiable functions $f \colon \R \rightarrow \R$ with compact support $\operatorname{spt}(f) = \overline{\{x \in \R \colon f(x) \neq 0\}}$. $\e_1, \dots, \e_n$ is the standard basis of $\R^n$ and for a matrix $A \in \R^{n \times n}$ its set of eigenvalues is denoted by $\sigma(A)$. For a set $D$, we denote its characteristic function by $\chi_D$.

\section{Local activity}\label{sec:local-activity}

Local activity is a concept for linear differential equations, which arise e.g.\ as linearizations of nonlinear differential equations at equilibria. A nonlinear differential equation with several equilibria is called locally active if there exists at least one equilibrium whose associated equation \eqref{eq:linear-P} is locally active (see Definition \ref{def:local-activity} below). For a discussion on nonlinear differential equations, see Section \ref{sec:nonlinear}.

\begin{definition}[Local activity with port variables, see {\cite[Definition 1]{Chua05}}]\label{def:local-activity}
Let $A \in \R^{n \times n}$ and $P \in \R^{n \times n}$ be a projection. Consider the following class of differential equations
\begin{equation}\label{eq:linear-P}
  \dot x(t) = A x(t) + P u(t)
\end{equation}
with $u \in C(\R_{\geq 0}, \R^n)$. Equation \eqref{eq:linear-P}, or equivalently the pair $(A,P)$, is called 
\emph{locally active} if there exist $T>0$ and $u\in C([0,T],\R^n)$ such that the solution $x\in C([0,T],\R^n)$ of the initial value problem \eqref{eq:linear-P}, $x(0) = 0$, satisfies
\[
   W_T(u) \coloneqq \int_0^T \langle x(t),Pu(t)\rangle dt < 0.
\]
Equation \eqref{eq:linear-P} is called \emph{locally passive} if it is not locally active, i.e.\ if for all $T>0$ and $u\in C([0,T],\R^n)$ the inequality $W_T(u) \geq 0$ holds.

If $\e_1, \dots, \e_m \in \im P$ (in particular if $P = \diag(1,\dots,1,0,\dots,0)$ with $\rank P = m$) then the first $m$ variables $x_1, \dots, x_m$ of \eqref{eq:linear-P} are called \emph{port variables}. 
\end{definition}

Chua \cite[Theorem 4]{Chua05} states a characterization of local activity for projections of the form $P = \diag(1,\dots,1,0,\dots,0)$ in terms of four properties of an appropriate Laplace transform, the so-called complexity function \cite[Formula (18)]{Chua05} (see also \cite[Theorem 2.14]{BrogliatoEtal2007}). The following theorem characterizes local activity by properties of the system matrix $A$ in case the projection is trivial.

\begin{theorem}[Local activity if all variables are port variables]\label{thm:local-activity-allport}
Consider equation \eqref{eq:linear-P} with $P = I$. Then the following three statements are equivalent:
\begin{enumerate}[(i)]
  \item \eqref{eq:linear-P} is locally active,
  \item $A$ is not dissipative, i.e.\ there exists $x \in \R^n$ with $\langle Ax,x\rangle > 0$,
  \item there exists a positive eigenvalue of $A + A^\top$.
\end{enumerate}
\end{theorem}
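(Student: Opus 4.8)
The plan is to dispatch $(ii)\Leftrightarrow(iii)$ by elementary linear algebra and to handle $(i)\Leftrightarrow(ii)$ by an energy identity in one direction and by an explicit choice of input in the other. For $(ii)\Leftrightarrow(iii)$: since $\langle Ax,x\rangle=\tfrac12\langle(A+A^{\top})x,x\rangle$ for all $x\in\R^{n}$, statement $(ii)$ says precisely that the symmetric matrix $A+A^{\top}$ fails to be negative semi-definite, which, by the spectral theorem (equivalently, by evaluating the Rayleigh quotient $x\mapsto\langle(A+A^{\top})x,x\rangle/\|x\|^{2}$ at a top eigenvector), is equivalent to $A+A^{\top}$ having a strictly positive eigenvalue.

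For $\neg(ii)\Rightarrow\neg(i)$ I would use the energy balance. Assume $\langle Ax,x\rangle\le 0$ for all $x\in\R^{n}$, fix $T>0$, $u\in C([0,T],\R^{n})$ and the corresponding solution $x$ of \eqref{eq:linear-P} with $P=I$, $x(0)=0$. From $\dot x=Ax+u$ one gets $\tfrac{d}{dt}\|x(t)\|^{2}=2\langle Ax(t),x(t)\rangle+2\langle x(t),u(t)\rangle$, and integrating over $[0,T]$ with $x(0)=0$ yields
\[
  W_{T}(u)=\tfrac12\|x(T)\|^{2}-\int_{0}^{T}\langle Ax(t),x(t)\rangle\,dt\ \ge\ \tfrac12\|x(T)\|^{2}\ \ge\ 0,
\]
so \eqref{eq:linear-P} is locally passive. (This is the special case $P=I$ of Proposition~\ref{prop:dissipative}.)

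For $(ii)\Rightarrow(i)$: choose $x_{\ast}\in\R^{n}$ with $\langle Ax_{\ast},x_{\ast}\rangle>0$, fix $T>0$ and any $\varphi\in C_{c}^{\infty}(\R)$ with $\varphi\not\equiv 0$ and $\operatorname{spt}\varphi\subseteq(0,T)$, and set $u(t)\coloneqq\dot\varphi(t)\,x_{\ast}-\varphi(t)\,Ax_{\ast}$. Then $x(t)\coloneqq\varphi(t)\,x_{\ast}$ satisfies $\dot x=Ax+u$ and $x(0)=0$, hence is \emph{the} solution of the initial value problem by uniqueness for linear ODEs with continuous input. A short computation, in which the cross term $\int_{0}^{T}\varphi\dot\varphi\,dt=\tfrac12\bigl[\varphi^{2}\bigr]_{0}^{T}$ vanishes because $\varphi$ vanishes at both endpoints, gives
\[
  W_{T}(u)=-\langle Ax_{\ast},x_{\ast}\rangle\int_{0}^{T}\varphi(t)^{2}\,dt\ <\ 0,
\]
so \eqref{eq:linear-P} is locally active.

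I do not expect a serious obstacle: when $P=I$ everything is soft. The only point that needs an idea rather than a calculation is the ansatz in the last step, namely forcing the state to travel along the fixed ray $\R x_{\ast}$ on which $x\mapsto\langle Ax,x\rangle$ is positive, and choosing the scalar profile $\varphi$ to vanish at $t=0$ and $t=T$ so that the boundary term drops out and $W_{T}(u)$ becomes a negative multiple of $\int_{0}^{T}\varphi^{2}$. (The genuinely hard and, as the introduction notes, still-open problem is the analogous characterization for a nontrivial projection $P$, to which this argument does not extend, since prescribing $x\in\R x_{\ast}$ would in general force $u$ to have a component outside $\im P$.)
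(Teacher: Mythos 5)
Your proof is correct, and it reorganizes the argument compared to the paper. The equivalence $(ii)\Leftrightarrow(iii)$ and the passivity direction $\neg(ii)\Rightarrow\neg(i)$ are exactly as in the paper (the latter is the $P=I$ case of Proposition~\ref{prop:dissipative}, which the paper invokes via Theorem~\ref{thm:passivity}). The difference is in the remaining implication: the paper proves $(i)\Rightarrow(ii)$ in the Hilbert-space setting of Theorem~\ref{thm:passivity}, where $A$ is only a semigroup generator, and this forces a detour --- one solves the $\rho$-shifted problem $\dot x = Ax+u-\rho x$ with $\rho$ above the growth bound, uses local passivity to get $\Re\int_0^T\langle Ax,x\rangle\,dt\le\frac12|x(T)|^2$, lets $T\to\infty$ so the boundary term dies, and only then inserts the test trajectory $x=\phi x_0$. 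You instead prove the contrapositive $(ii)\Rightarrow(i)$ directly by exhibiting an input, which is legitimate and strictly simpler in $\R^n$ where $A$ is bounded: your ansatz $x(t)=\varphi(t)x_\ast$ with $u=\dot\varphi\,x_\ast-\varphi\,Ax_\ast$ is the same test trajectory the paper uses at the end of its argument (with $\rho=0$), and your computation $W_T(u)=-\langle Ax_\ast,x_\ast\rangle\int_0^T\varphi^2\,dt<0$ is correct since the cross term vanishes by the compact support of $\varphi$ in $(0,T)$. What your route gives up is the infinite-dimensional generality: for an unbounded generator one still needs $x_0\in D(A)$ and, for the converse direction as stated in Theorem~\ref{thm:passivity}, the shifted-semigroup limiting argument; what it buys is a short, self-contained proof of the finite-dimensional theorem that never leaves $\R^n$. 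Your closing remark about why the construction fails for nontrivial $P$ is also accurate and consistent with the paper's discussion.
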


\begin{proof}
$(i) \Leftrightarrow (ii)$. This is the statement of Theorem \ref{thm:passivity} in Appendix \ref{sec:appendix} applied to the Hilbert space $H = \R^n$. 

$(ii) \Rightarrow (iii)$. Assume that $(iii)$ does not hold. There exists an orthogonal basis of $\R^n$ of eigenvectors $v_1, \dots, v_n$ of $A + A^\top$, i.e.\ $(A + A^\top)v_i = \lambda_i v_i$ with $\lambda_i \leq 0$. We show the opposite of $(ii)$, i.e.\ $\langle Ax,x\rangle \leq 0$ for all $x \in \R^n$. To this end, let $x \in \R^n$. Then $x = \sum_{i=1}^{n} \mu_i v_i$ with $\mu_i \in \R$ and $\langle Ax,x\rangle = \frac{1}{2} \langle (A + A^\top)x,x\rangle = \frac{1}{2}
\langle \sum_{i=1}^{n} \mu_i (A + A^\top) v_i, \sum_{i=1}^{n} \mu_i v_i\rangle = \frac{1}{2} \sum_{i=1}^{n} \lambda_i \mu_i^2 \|v_i\|^2 \leq 0$.

$(iii) \Rightarrow (ii)$. If $(A + A^\top) v = \lambda v$ with $v \in \R^n \setminus \{0\}$ and $\lambda \in \R_{>0}$, then
$\langle A v, v \rangle
   =
   \frac{1}{2} \langle (A + A^\top) v, v \rangle
   =
   \frac{1}{2} \lambda \|v\|^2 > 0$.
\end{proof}

\begin{remark}[Necessary condition for local activity]
If equation \eqref{eq:linear-P} with an orthogonal projection $P$ is locally active, then there exists $x \in \R^n$ with $\langle PAx,x \rangle > 0$. This follows directly from Proposition \ref{prop:dissipative} in Appendix \ref{sec:appendix}.
\end{remark}

It is not obvious how the spectrum $\sigma(A)$ of the system matrix $A$ in \eqref{eq:linear-P} is related to local activity. The following two examples indicate a partial answer. A generic statement is formulated in Theorem \ref{thm:generic-local-activity} below.

\begin{example}[In $\R^{n}$ a positive real eigenvalue implies local activity for certain projections]\label{ex:real}
Let $A \in \R^{n \times n}$ and $\lambda > 0$ be an eigenvalue of $A$ with corresponding eigenvector $v \in \R^n$. Let $P \in \R^{n \times n}$ be a projection with $v \in \operatorname{im} P$. Then  \eqref{eq:linear-P} is locally active, in fact, for arbitrary $T > 0$ one can choose
\begin{equation*}\label{eq:locally-active-real}
  u(t)
  \coloneqq
  \rho'\left(\frac{2t}{T}-1\right)e^{\lambda t}v
  \quad \text{for } t\in[0,T],
\end{equation*}
where $\rho\in C_{c}^{\infty}(\mathbb{R})$ denotes the Friedrichs mollifier defined by 
\begin{equation}\label{eq:mollifier}
  \rho(t) \coloneqq
  \begin{cases}
    e^{\frac{1}{t^{2}-1}} & \mbox{ if }-1<t<1,
  \\
    0 & \mbox{ otherwise}.
  \end{cases}
\end{equation}
To see this, we use the fact that $Pu(t) = u(t)$ and compute the solution
\begin{align*}
  x(t) &= \int_{0}^{t} e^{(t-s)A} u(s) \,ds\\
  &=
  \int_{0}^{t} e^{(t-s)\lambda} \rho'\left(\frac{2s}{T}-1\right)e^{\lambda s}v \,ds\\
  &  =
  \frac{T}{2}\rho\left(\frac{2t}{T}-1\right)e^{\lambda t}v
\end{align*}
of \eqref{eq:linear-P}, $x(0)=0$, on $[0,T]$.
We set $q(t)\coloneqq\rho\left(\frac{2t}{T}-1\right)^{2}$ und get
\begin{align*}
  W_{T}(u) 
  &=
  \frac{T}{2}\int_{0}^{T} e^{2\lambda t} 
  \rho\left(\frac{2t}{T}-1\right)\rho'\left(\frac{2t}{T}-1\right) \,dt\, |v|_{\R^n}^{2}
\\
  &=
  \frac{T^2}{8}\int_{0}^{T}e^{2\lambda t}
  q'(t) \,dt\, |v|_{\R^n}^{2}
\\
  &=
  \frac{T^2}{8} \left(e^{2\lambda T}q(T)-q(0)\right) |v|_{\R^n}^{2} -
  \frac{\lambda T^2}{4} \int_{0}^{T} e^{2\lambda t} q(t) \,dt\, |v|_{\R^n}^{2}
\\
  &=
  -\frac{\lambda T^2}{4} \int_{0}^{T}e^{2\lambda t} q(t) \,dt\, |v|_{\R^n}^{2}
  < 0,
\end{align*}
since $q(t)>0$ on $(0,T)$, i.e.\ system \eqref{eq:linear-P} is locally active.
\end{example}

\begin{example}[In $\R^n$ a complex eigenvalue with positive real part implies local activity for certain projections]\label{ex:complex}
Let $A \in \R^{n \times n}$ and $\alpha+\i\beta \in \C$ be an eigenvalue of $A$ with $\alpha > 0$, $\beta \neq 0$, and  corresponding eigenvector $v = v_1 + \i v_2 \in \C^n$ with $v_1, v_2 \in \R^n$. Let $P \in \R^{n \times n}$ be a projection with $v_1, v_2 \in \operatorname{im} P$. Then \eqref{eq:linear-P} is locally active, in fact, there exists $t_0>0$ such that for $T > t_0$ one can choose
\begin{equation}\label{eq:locally-active-complex}
  u(t)
  \coloneqq 
  h'(t)e^{\alpha t}(\sin(\beta t)v_{1}+\cos(\beta t)v_{2})
  \quad \text{for } t\in[0,T],
\end{equation}
where $h(t)\coloneqq\rho\left(\frac{t-t_{0}}{\varepsilon}\right)$ with the Friedrichs mollifier as defined in \eqref{eq:mollifier} and for some suitable $\varepsilon>0$. To see this, we first define
\[
  g(t)
  \coloneqq 
  e^{2\alpha t}|\sin(\beta t)v_{1}+\cos(\beta t)v_{2}|_{\mathbb{R}^{n}}^{2}
  \quad \text{for } t\in\mathbb{R}_{\geq0}
\]
and show that there exists a $t_0>0$ such that $g'(t_0) > 0$. To this end, we distinguish between the following two cases:

Case 1: $\sin(\beta t)v_{1}+\cos(\beta t)v_{2}\ne0$ for each $t\in\mathbb{R}_{\geq0}.$
Then, due to $\beta \neq 0$ and periodicity, we derive $|\sin(\beta t)v_{1}+\cos(\beta t)v_{2}|_{\mathbb{R}^{n}}^{2}\geq c>0$
for some $c>0$ and each $t\in\mathbb{R}_{\geq0}.$ Consequently $g(t)\to\infty$
as $t\to\infty$ and hence, there exists $t_{0}>0$ with $g'(t_{0})>0$.

Case 2: There is $t_1\ge 0$ with $\sin(\beta t_1)v_1 + \cos(\beta t_1)v_2 = 0$. Then $0=g(t_1)=g(t_1+\frac{2\pi}{\beta})$ and $g\ge 0$. Next, $g\ne 0$ on $(t_1,t_1+\frac{2\pi}{\beta})$. Indeed, if $g=0$ on $(t_1,t_1+\frac{2\pi}{\beta})$, then, by peridiodicity, $g=0$ on $\R_{\ge0}$. Thus, \[0=g(0)=|v_2|=g(\pi/(2\beta))=\exp(\alpha\pi/\beta)|v_1|\] contradicting the fact that $v_1+iv_2$ is an eigenvector. So, $g\ge 0$ on $(t_1,t_1+\frac{2\pi}{\beta})$ and $g(t_2)>0$ for some $t_2\in (t_1,t_1+\frac{2\pi}{\beta})$, which eventually implies that there is $t_0\in (t_1,t_1+\frac{2\pi}{\beta})$ with $g'(t_0)>0$.

Let $T > t_0$. Since $g'$ is continuous, there exists $\varepsilon>0$ such that
$g'>0$ on $(t_{0}-\varepsilon,t_{0}+\varepsilon) \subset [0,T].$ Then $h(t) = \rho(\frac{t-t_{0}}{\varepsilon})$ satisfies $h>0$ on $(t_{0}-\varepsilon,t_{0}+\varepsilon)$ and $h=0$ on $[0,T] \setminus (t_{0}-\varepsilon,t_{0}+\varepsilon)$ and hence,
\begin{equation}\label{eq:lemma-gh2}
  \int_{0}^{T} g'(t)h^{2}(t) \,dt 
  =
  \int_{t_{0}-\varepsilon}^{t_{0}+\varepsilon}g'(t)h^{2}(t) \,dt>0.
\end{equation}
Using the facts that $e^{tA}v_{1}=\Re\left(e^{tA}v\right)=\Re\left(e^{t\lambda}v\right)=e^{\alpha t}\left(\cos(\beta t)v_{1}-\sin(\beta t)v_{2}\right)$ and analogously $e^{tA}v_{2}=e^{\alpha t}\left(\cos(\beta t)v_{2}+\sin(\beta t)v_{1}\right)$, and $Pu(t) = u(t)$, we compute the solution $x(t)$ of \eqref{eq:linear-P} which satisfies $x(0)=0$ and get
\begin{align*}
  x(t) 
  &= 
  \int_{0}^{t}e^{(t-s)A}u(s) \,ds 
\\
  &=
  \int_{0}^{t}e^{(t-s)A}h'(s)e^{\alpha s}\sin(\beta s)v_{1} \,ds +
  \int_{0}^{t}e^{(t-s)A}h'(s)e^{\alpha s}\cos(\beta s)v_{2} \,ds
\\
  & =\int_{0}^{t}h'(s)e^{\alpha t}\sin(\beta s)\left(\cos(\beta(t-s))v_{1}-\sin(\beta(t-s))v_{2}\right) \,ds + {} \\
 & \quad+\int_{0}^{t}h'(s)e^{\alpha t}\cos(\beta s)\left(\cos(\beta(t-s))v_{2}+\sin(\beta(t-s))v_{1}\right) \,ds\\
 & =\int_{0}^{t}h'(s) \,ds\; e^{\alpha t}\left(\cos(\beta t)v_{2}+\sin(\beta t)v_{1}\right)\\
 & =h(t)e^{\alpha t}\left(\cos(\beta t)v_{2}+\sin(\beta t)v_{1}\right)
\end{align*}
for each $t\in[0,T]$. Hence, with \eqref{eq:lemma-gh2} we get
\begin{equation*}
W_{T}(u) =\int_{0}^{T}h(t)h'(t)g(t) \,dt =-\frac{1}{2}\int_{0}^{T}g'(t)h^{2}(t) \,dt < 0,
\end{equation*}
i.e.\ system \eqref{eq:linear-P} is locally active.
\end{example}

Examples \ref{ex:real} and \ref{ex:complex} already indicate that we cannot expect the implication
\begin{equation*}
   A \text{ has an eigenvalue with positive real part }
   \Rightarrow
   \text{ system } \eqref{eq:linear-P} \text{ is locally active}
\end{equation*}
to hold without additional assumptions. In Theorem \ref{thm:generic-local-activity} we show that this implication holds generically. As a preparation we need the following lemma which provides an explicit representation of $W_T(u)$ for specific discontinuous two-pulse signals $(u_1,0,\dots,0)$ in the first component.

\begin{lemma}[Scalar two-pulse signals]\label{lem:char_fct}
Consider \eqref{eq:linear-P} and assume that $x_1$ is a port variable, i.e.\ ${\bf e}_1 \in \im P$ and that $A$ is diagonalizable, i.e.\ there exists a non-singular matrix $G = (g_{i \ell})_{i, \ell = 1, \dots, n} \in \C^{n \times n}$ with inverse $H =(h_{\ell j})_{\ell, j = 1, \dots, n} \in \C^{n \times n}$ such that $H A G = \diag(\lambda_1, \dots, \lambda_n)$ where $\lambda_i$ denote the eigenvalues of $A$.
Let $a,b \in \R$, $T > 0$, $k \geq \frac{2}{T}$. Then for $u=(u_1,0,\dots,0) \in L^2([0,T], \R^n)$ with 
\[
  u_1 \coloneqq a \chi_{[0,\frac{1}{k}]}+b\chi_{[T-\frac{1}{k},T]}
\]
we have
\begin{equation}{\label{eq:char}}
  W_T(u)
  = 
  \sum_{\ell=1}^n g_{1\ell} h_{\ell 1}
  \Big(\frac{a^{2}+b^{2}}{\lambda_\ell^{2}}
  \big(e^{\frac{\lambda_\ell}{k}}-1\big)+\frac{ab}{\lambda_\ell^{2}}e^{\lambda_\ell T}
  \big(1-e^{-\frac{\lambda_\ell}{k}}\big)^{2}-\frac{a^{2}+b^{2}}{k \lambda_\ell}\Big).
\end{equation}
\end{lemma}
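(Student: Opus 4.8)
The plan is to reduce $W_T(u)$ to a single scalar double integral over the triangle $\{0\le s\le t\le T\}$, to express the matrix exponential through the given diagonalisation, and then to evaluate the resulting integral termwise by exploiting the simple structure of the two-pulse signal $u_1$.

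First I would scalarise. Since $P$ is a projection with $\e_1\in\im P$ we have $P\e_1=\e_1$, hence $Pu(t)=u_1(t)\e_1$, and the solution of \eqref{eq:linear-P} with $x(0)=0$ is the absolutely continuous function $x(t)=\int_0^t e^{(t-s)A}\e_1\,u_1(s)\,ds$ (the variation-of-constants formula remains valid for $u\in L^2([0,T],\R^n)$, and $W_T$ is then still well defined). Taking the inner product with $Pu(t)=u_1(t)\e_1$ and applying Fubini gives
\[
  W_T(u)=\int_0^T u_1(t)\,x_1(t)\,dt=\int_0^T\!\!\int_0^t \big(\e_1^\top e^{(t-s)A}\e_1\big)\,u_1(t)u_1(s)\,ds\,dt .
\]

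Next I would insert the diagonalisation. From $A=G\diag(\lambda_1,\dots,\lambda_n)H$ with $H=G^{-1}$ one gets $e^{\tau A}=G\diag(e^{\tau\lambda_1},\dots,e^{\tau\lambda_n})H$, so $\e_1^\top e^{\tau A}\e_1=\sum_{\ell=1}^n g_{1\ell}h_{\ell 1}e^{\tau\lambda_\ell}$. Substituting this and interchanging the finite sum with the integral yields $W_T(u)=\sum_{\ell=1}^n g_{1\ell}h_{\ell 1}\,I_\ell$ with $I_\ell=\iint_{0\le s\le t\le T} e^{(t-s)\lambda_\ell}\,u_1(t)u_1(s)\,ds\,dt$. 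The heart of the argument is the evaluation of $I_\ell$. Here the hypothesis $k\ge 2/T$ is used: it guarantees $1/k\le T-1/k$, so the intervals $[0,\tfrac1k]$ and $[T-\tfrac1k,T]$ are disjoint. Consequently, on the triangle $\{0\le s\le t\le T\}$ the product $u_1(t)u_1(s)$ is supported on precisely three rectangles --- $[0,\tfrac1k]^2$ with value $a^2$, $[T-\tfrac1k,T]^2$ with value $b^2$, and $\{s\in[0,\tfrac1k],\,t\in[T-\tfrac1k,T]\}$ with value $ab$ (the latter lying entirely in $\{s\le t\}$) --- and on each of these the integral of $e^{(t-s)\lambda_\ell}$ is elementary. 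A short computation gives $\tfrac{1}{\lambda_\ell^2}(e^{\lambda_\ell/k}-1)-\tfrac{1}{k\lambda_\ell}$ for each of the first two rectangles, while the third factorises into a product of two one-dimensional integrals and equals $\tfrac{1}{\lambda_\ell^2}e^{\lambda_\ell T}(1-e^{-\lambda_\ell/k})^2$. Collecting these three contributions with the weights $a^2,b^2,ab$ produces the bracketed expression in \eqref{eq:char}, and summing over $\ell$ finishes the proof.

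I do not expect a genuine obstacle: the proof is bookkeeping. The two points that need care are the disjointness observation above (precisely what the condition $k\ge 2/T$ buys us, since otherwise the three rectangles would overlap and the formula would change) and the degenerate case $\lambda_\ell=0$, in which the $\ell$-th summand in \eqref{eq:char} must be read as the limit $\lambda_\ell\to 0$; a Taylor expansion shows this limit equals $\tfrac{(a+b)^2}{2k^2}$, which one verifies directly coincides with $I_\ell=\tfrac12\big(\int_0^T u_1\big)^2$ when $\lambda_\ell=0$.
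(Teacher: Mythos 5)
Your proposal is correct and takes essentially the same route as the paper: scalarise via $Pu=u_1\e_1$, apply variation of constants, insert $e^{\tau A}=Ge^{\diag(\lambda_1,\dots,\lambda_n)\tau}H$ to get $\e_1^\top e^{\tau A}\e_1=\sum_\ell g_{1\ell}h_{\ell 1}e^{\tau\lambda_\ell}$, and evaluate the resulting scalar double integral termwise. The paper dismisses that last evaluation as ``a direct computation''; your three-rectangle decomposition (with $k\ge 2/T$ guaranteeing disjointness of the pulses) is precisely that computation spelled out, and your observation about the limiting interpretation at $\lambda_\ell=0$ is a correct refinement the paper leaves implicit.
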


\begin{proof}
Using the fact that 
\[
  e^{At} = G e^{\diag(\lambda_1, \dots, \lambda_n) t} H
  = \Big(\sum_{\ell=1}^n g_{i\ell} h_{\ell j}  e^{\lambda_\ell t} \Big)_{i,j=1,\dots,n}
\]
we have for $x = (x_1,0,\dots,0) \in \R^n$ 
\[
  e^{A(t-\tau)} x
  =
  \sum_{i=1}^n \Big(\sum_{\ell=1}^n g_{i\ell} h_{\ell 1} e^{\lambda_\ell (t-\tau)} x_1\Big) \e_i.
\]
Let $u=(u_1,0,\dots,0) = u_1 \e_1 \in C([0,T],\R^n)$ be arbitrary. By assumption $x_1$ is a port variable and therefore $P u = u$. The solution $x$ of \eqref{eq:linear-P}, $x(0)=0$, is $x(t) = \int_0^t e^{A(t-\tau)} u_1(\tau) \e_1 \,d\tau$ and hence
\begin{align}\label{corerepresentationformula}
  W_T(u_1 \e_1) 
  &= 
  \int_0^T \langle x(t),P u_1(t) \e_1 \rangle \,dt
  =
  \int_0^T \Big\langle \int_0^t e^{A(t-\tau)} u_1(\tau) \e_1 \,d\tau,
  u_1(t) \e_1
  \Big\rangle \,dt 
  \nonumber
  \\ 
   &=
  \int_0^T \Big\langle \int_0^t \sum_{i=1}^n 
  \Big(\sum_{\ell=1}^n g_{i\ell} h_{\ell 1} e^{\lambda_\ell (t-\tau)} u_1(\tau)\Big) \e_i \, d\tau, 
  u_1(t) \e_1\Big\rangle \, dt 
  \nonumber
  \\ 
   &=
   \int_0^T \int_0^t \sum_{\ell=1}^n 
   g_{1\ell} h_{\ell 1} e^{\lambda_\ell (t-\tau)} u_1(\tau) u_1(t) \,d\tau \,dt.
\end{align} 
Chosing $u_1 \coloneqq a \chi_{[0,\frac{1}{k}]}+b\chi_{[T-\frac{1}{k},T]}$, a direct computation shows 
for arbitrary $\lambda \in \C$ and $t \in [0,T]$ that
$\int_0^T \int_0^t e^{\lambda(t-\tau)} u_1(\tau)u_1(t) \,d\tau \,dt
=
\frac{a^{2}+b^{2}}{\lambda^{2}}(e^{\frac{\lambda}{k}}-1)+\frac{ab}{\lambda^{2}}e^{\lambda T}
(1-e^{-\frac{\lambda}{k}})^{2}-\frac{a^{2}+b^{2}}{k \lambda}$ and \eqref{eq:char} follows from \eqref{corerepresentationformula}.
\end{proof}


The main result of this section is that, in the generic case, instability of the linear system ${\dot x} = Ax$ implies local activity.

\begin{theorem}[Instability generically implies local activity]\label{thm:generic-local-activity}
Generically, if system \eqref{eq:linear-P} in $\R^n$ has an eigenvalue with positive real part, then it is locally active. More precisely, for every projection matrix $P \in \R^{n \times n}$, $P \neq 0$, there exists an open and dense set $\mathcal{M} \subseteq \R^{n \times n}$ such that the following implication holds
\begin{equation*}
   A \in \mathcal{M}
   \;\wedge\;
   \Re(\sigma(A)) > 0
   \;\;\Rightarrow\;\;
   \exists T \geq 2 \,\exists u \in C([0,T], \R^n)  \colon W_T(u) < 0.
\end{equation*}
\end{theorem}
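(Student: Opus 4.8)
The plan is, for a given nonzero projection $P$ with $V\coloneqq\im P$, to take as the genericity set
\[
  \mathcal M\coloneqq\{A\in\R^{n\times n}\colon \det A\neq0,\ A\text{ has }n\text{ distinct eigenvalues, and no eigenvector of }A\text{ or of }A^\top\text{ is orthogonal to }V\},
\]
where for a complex eigenvector $v=v'+\i v''$ ``orthogonal to $V$'' is read as ``both $v'$ and $v''$ lie in $V^\perp$''. The first two conditions cut out the complement of a proper real-algebraic subset and are classically open and dense. For the third condition I would argue that $\{A\colon A\text{ has an eigenvector in the fixed subspace }W\coloneqq V^\perp\}$ is nowhere dense whenever $W\neq\R^n$: for each line $[v]\in\mathbb P(W_\C)$ the set of $A$ with $Av\parallel v$ is an affine subspace of dimension $n^2-(n-1)$, so the incidence set — and hence its projection to $\R^{n\times n}$ — has dimension at most $n^2-n+\dim W<n^2$; the same applies to $A^\top$. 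Since on the distinct-spectrum locus eigenvectors depend continuously on $A$, $\mathcal M$ is open; thus $\mathcal M$ is open and dense (see also the genericity result in Appendix~\ref{sec:appendix}).

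Next I would record the (trivial) extension of Lemma~\ref{lem:char_fct} from the coordinate direction $\e_1$ to an arbitrary unit vector $e\in V$: the proof of that lemma uses only $Pu=u$, which still holds for $u(t)=u_1(t)e$ with $u_1\coloneqq a\chi_{[0,1/k]}+b\chi_{[T-1/k,T]}$, $k\ge 2/T$. One obtains \eqref{eq:char} with $g_{1\ell}h_{\ell 1}$ replaced by $c_\ell\coloneqq(e^\top G)_\ell(He)_\ell$, and where, crucially, $\sum_\ell c_\ell=|e|^2=1$ and $\sum_\ell c_\ell e^{\lambda_\ell T}=\langle e^{AT}e,e\rangle=:r(T)$. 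Expanding each summand of the modified \eqref{eq:char} in powers of $1/k$, the $1/k$-terms cancel and $k^2 W_T(u_1 e)\xrightarrow{k\to\infty}\tfrac{a^2+b^2}{2}+ab\,r(T)=:\Phi(a,b,T)$. As a quadratic form in $(a,b)$, $\Phi(\cdot,\cdot,T)$ is indefinite — negative at $(1,-1)$ if $r(T)>1$ and at $(1,1)$ if $r(T)<-1$ — exactly when $|r(T)|>1$. Hence it suffices to produce a unit $e\in V$ and some $T\ge2$ with $|\langle e^{AT}e,e\rangle|>1$: for such $T$ one gets $W_T(u_1 e)<0$ once $k$ is large, and since $u\mapsto W_T(u)$ is a bounded quadratic form on $L^2([0,T],\R^n)$ (the solution map $u\mapsto x$ being $L^2$-bounded), approximating the pulse by continuous signals produces the required $u\in C([0,T],\R^n)$.

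Finally I would produce such $e$ and $T$ from membership in $\mathcal M$. Since no eigenvector of $A$ or $A^\top$ is orthogonal to $V$, each of the finitely many linear subspaces $\{e\in V\colon(e^\top G)_\ell=0\}$ and $\{e\in V\colon(He)_\ell=0\}$ is a proper subspace of $V$; choosing a unit $e\in V$ outside their union yields $c_\ell\neq0$ for every $\ell$. Let $\alpha^\ast\coloneqq\max_\ell\Re\lambda_\ell$; the hypothesis that $A$ has an eigenvalue with positive real part gives $\alpha^\ast>0$. With $L\coloneqq\{\ell\colon\Re\lambda_\ell=\alpha^\ast\}$ one has $e^{-\alpha^\ast T}r(T)=\sum_{\ell\in L}c_\ell e^{\i(\Im\lambda_\ell)T}+o(1)$ as $T\to\infty$, and the leading trigonometric polynomial has pairwise distinct frequencies (distinct eigenvalues) and all coefficients $c_\ell\neq0$, so its mean square equals $\sum_{\ell\in L}|c_\ell|^2>0$ and hence $\limsup_{T\to\infty}e^{-\alpha^\ast T}|r(T)|>0$. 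Because $\alpha^\ast>0$ this forces $\limsup_{T\to\infty}|r(T)|=\infty$, so $|r(T)|>1$ for arbitrarily large $T$, in particular for some $T\ge2$, which completes the proof. The step I expect to be most delicate is the genericity in the first paragraph: checking that ``no eigenvector orthogonal to $V$'' really is a dense condition for \emph{every} fixed projection $P$ (including degenerate positions of $\im P$ relative to the standard basis or to the eigenstructure of $A$), and making sure a single $e\in V$ can be picked meeting all the relevant eigenvectors nontrivially at once.
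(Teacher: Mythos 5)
Your argument is correct in substance but follows a genuinely different route from the paper's. The paper first applies an orthogonal change of coordinates so that $\e_1\in\im P$, takes the two-pulse signal of Lemma~\ref{lem:char_fct} with fixed width $k=b=1$ and a sign parameter $a\in\{-1,1\}$, and relies on the genericity set of Lemma~\ref{cor:product_generic_nonzero} (invertibility, simple spectrum, a \emph{strictly dominating} real part attained by a single eigenvalue or conjugate pair, and $g_{11}^Ah_{11}^A\neq0$) to isolate a single oscillating dominant term $a\,|\{\lambda_1,\overline\lambda_1\}|\Re(c_{\lambda_1}e^{\i(\Im\lambda_1)T})$ whose sign can be forced negative for suitable $a$ and arbitrarily large $T$. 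You instead keep a general unit direction $e\in\im P$, let the pulse width $1/k\to0$ so that $k^2W_T\to\tfrac{a^2+b^2}{2}+ab\,\langle e^{AT}e,e\rangle$ (your identities $\sum_\ell c_\ell=1$ and $\sum_\ell c_\ell e^{\lambda_\ell T}=\langle e^{AT}e,e\rangle$ are exactly right), and reduce everything to the clean criterion $|\langle e^{AT}e,e\rangle|>1$ for some $T\geq2$. This buys two things: no coordinate transformation is needed, and — thanks to the Parseval/mean-square argument for the almost-periodic leading part — you do not need the ``unique dominating real part'' condition in your genericity set, only simplicity of the spectrum and nonvanishing of all the $c_\ell$. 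The paper's route, in exchange, needs no limit in $k$ and leans directly on the explicit formula \eqref{eq:char} and the appendix lemmas. The verification steps on your side (indefiniteness of $\Phi$ iff $|r(T)|>1$, distinctness of the frequencies $\Im\lambda_\ell$ on the dominant line, $\limsup_T|r(T)|=\infty$, density of continuous signals in $L^2$ and continuity of $W_T$) all check out.

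The one place you should tighten is the density of the condition ``no eigenvector of $A$ or $A^\top$ is orthogonal to $V$'', which you rightly flag as delicate. Your dimension count treats only real eigenvectors (fiber of dimension $n^2-(n-1)$ over $\mathbb P(W)$); for a genuinely complex eigenvector $v'+\i v''$ with $v',v''\in W$ the relevant condition is that $A$ preserves the $2$-plane $\operatorname{span}(v',v'')$ and acts conformally in that basis, giving a fiber of dimension $n^2-2n+2$ over a $2(\dim W-1)$-dimensional base — the count still closes, with room to spare, but it must be written out. You should also say explicitly why a dimension bound on the incidence set yields nowhere-density of its projection (the incidence set is closed and semialgebraic, the projective base is compact, so the projection is a closed semialgebraic set of dimension $<n^2$ and hence has empty interior). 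With those two points supplied, the proof is complete.
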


\begin{proof}We divide the proof into four steps. In the first step we transform system \eqref{eq:linear-P} by an orthogonal coordinate transformation such that $\e_1$ is contained in the image of $P$. Then we compute $W_T(u)$ for two possible choices of  $u=(u_1,0,\dots,0) \in L^2([0,T], \R^n)$ according to Lemma \ref{lem:char_fct} in Step 2. Step 3 yields the estimate $W_T(u) < 0$ for the appropriate choice of $u$ and $T$ large enough. In Step 4 we approximate $u \in L^2([0,T], \R^n)$ by a continuous function in $C([0,T], \R^n)$.

{\bf Step 1:} W.l.o.g.\ $x_1$ is a port variable. More precisely, since $P \neq 0$ there exists a $v \in \im P \setminus \{0\}$ and an orthogonal matrix $Q \in \R^{n \times n}$ with $Qv = \e_1$. Using the nomenclature of Appendix \ref{subset:genericity}, Lemma \ref{cor:product_generic_nonzero} implies that the set
\begin{multline*}
   {\cal M}
   \coloneqq
   \{ Q^{-1}MQ \in \R^{n \times n} \,:\, M \in \R^{n \times n}, 0 \not\in \sigma(M),\\ |\sigma(M)|=n, \exists \lambda\in \sigma(M): \Re \lambda>\max \big(\Re (\sigma(A)\setminus \{\lambda,\overline{\lambda}\})\big),\\ 
   \text{ and } 
   g_{11}^M h_{11}^M \neq 0 \}
\end{multline*}
is open and dense in $\R^{n \times n}$. Let $A \in {\cal M}$. Then $x \mapsto \widetilde{x} = Q x$ transforms system \eqref{eq:linear-P} into
\begin{equation*}
  \dot{\widetilde{x}}(t) = \widetilde{A} \widetilde{x}(t) + \widetilde{P} \widetilde{u}(t)
\end{equation*}
with $\widetilde{A} = Q A Q^{-1}$, $\widetilde{P} = Q P Q^{-1}$, $\widetilde{u} = Q u$ and $\widetilde{x}_1$ is a port variable, since $\widetilde{P} \e_1 = Q P Q^{-1} \e_1 = Q P v = Q v = \e_1$. Moreover, $g_{11}^{\widetilde{A}} h_{11}^{\widetilde{A}} \neq 0$. For notational convenience, we omit the tilde and write again $x, u, A, P$ instead of $\widetilde{x}, \widetilde{u}, \widetilde{A}, \widetilde{P}$.

{\bf Step 2:} For $a \in \{-1,1\}$ define $u=(a \chi_{[0,1]} + \chi_{[T-1,T]},0,\dots,0) \in L^2([0,T], \R^n)$ for $T \geq 2$. We will fix $a$ in Step 3 such that $W_T(u) < 0$. Let $\lambda_1,\ldots,\lambda_n$ be an enumeration of $\sigma(A)$ such that $\Re \lambda_1=\max_{\ell\in \{1,\ldots,n\}}\Re\lambda_\ell$. For $\lambda \in \sigma(A)$, define $c_{\lambda} \coloneqq g_{1\ell}^A h_{\ell 1}^A \frac{1}{\lambda_\ell^2} (1 - e^{-\lambda_\ell})^2$, with $\ell \in \{1,\dots,n\}$ such that $\lambda = \lambda_\ell$. We note that $(g_{1\ell}^{A}, \dots, g_{n \ell}^{A})^T$ is the $\ell$-th column of $G(A)$ (see Appendix \ref{subset:genericity} for the corresponding notation) being the eigenvector corresponding to $\lambda_\ell$. By our choice of $A$, $c_{\lambda_1} \neq 0$. Lemma \ref{lem:char_fct} for $k=b=1$ yields
\begin{equation}\label{eq:W_T:generic}
  W_T(u)
  =
  \eta + a \sum_{\lambda \in \sigma(A)} c_\lambda e^{\lambda T},
\end{equation}
where we used the abbreviation $\eta \coloneqq \sum_{\ell=1}^n g_{1\ell}^A h_{\ell 1}^A (\frac{2}{\lambda_\ell^{2}}(e^{\lambda_\ell}-1)-\frac{2}{\lambda_\ell})$.

{\bf Step 3:} We rewrite \eqref{eq:W_T:generic}
\begin{align*}
  W_T(u) &= e^{(\Re \lambda_1) T}
  \Big( e^{-(\Re \lambda_1)T} \eta + a \sum_{\lambda \in \sigma(A)} c_\lambda e^{(\lambda - \Re \lambda_1) T} \Big)
\\
  &= e^{(\Re \lambda_1) T}
  \Big( e^{-(\Re \lambda_1)T} \eta + a \sum_{\lambda \in \sigma(A) \setminus\{\lambda_1, \overline{\lambda}_1\}} 
  c_\lambda e^{(\lambda - \Re \lambda_1) T} +
\\ & \quad +
  |\{\lambda_1, \overline{\lambda}_1\}| a \Re(c_{\lambda_1} e^{i (\Im \lambda_1) T})\Big),
\end{align*}
where we used the fact that $\sum_{\lambda \in \{\lambda_1, \overline{\lambda}_1\}} c_\lambda e^{(\lambda - \Re \lambda_1)T}$ equals $2 \Re(c_{\lambda_1}a e^{i (\Im \lambda_1) T})$ if $\lambda_1 \not\in \R$, and it
equals $c_{\lambda_1} \big(= \Re(c_{\lambda_1} e^{i (\Im \lambda_1) T})\big)$ if $\lambda_1 \in \R$. Note that for the first two terms in brackets we have
\begin{equation*}
  \lim_{T \to \infty} e^{-(\Re \lambda_1)T} \eta
  =
  \lim_{T \to \infty} a \sum_{\lambda \in \sigma(A) \setminus\{\lambda_1, \overline{\lambda}_1\}} 
  c_\lambda e^{(\lambda - \Re \lambda_1) T}
  = 
  0,
\end{equation*}
by the choice of $\lambda_1$, and for the third term, since $c_{\lambda_1} \neq 0$,
\begin{equation*}
  \limsup_{T \to \infty} \big| |\{\lambda_1, \overline{\lambda}_1\}| a \Re(c_{\lambda_1} e^{i (\Im \lambda_1) T}) \big|
  \eqqcolon m > 0.
\end{equation*}
We choose and fix $a \in \{-1,1\}$ such that for every $T_0 \geq 2$ there exists $T \geq T_0$ with $|\{\lambda_1, \overline{\lambda}_1\}| a \Re(c_{\lambda_1} e^{i (\Im \lambda_1) T}) < -\frac{m}{2}$. As a consequence there exists $T \geq 2$ with $W_T(u) < 0$. 

{\bf Step 4:} Using the fact that $C([0,T], \R)$ is dense in $L^2([0,T], \R)$ and $W_T : L^2([0,T], \R^n) \rightarrow \R$ is continuous, we can also find a $u=(u_1,0,\dots,0) \in C([0,T], \R^n)$ with $W_T(u) < 0$. Transforming back to the original coordinate system with $x \mapsto Q^{-1} x$ yields the statement of the theorem for system \eqref{eq:linear-P}.
\end{proof}

\begin{remark}
We conjecture that the statement of Theorem 7 is also true for genericity in the measure-theoretic sense. For results in this direction, see Arnold \cite[\S 30.H]{Arnold1988}.
\end{remark}

\section{Nonlinear systems and local activity}\label{sec:nonlinear}

Let $n \in \N$, $P \in \R^{n \times n}$ be a projection and let $f, D : \R^n \rightarrow \R^n$ be $C^1$ functions with $f(x_0) - P D(x_0) = 0$ for some $x_0 \in \R^n$. Consider the differential equation 
\begin{equation}\label{eq:nonlinear1}
  \dot x(t) = f(x(t)) - P D(x(t))
\end{equation}
with equilibrium $x_0$.
In this section we illustrate how to associate with \eqref{eq:nonlinear1} the linear system
\begin{equation}\label{eq:linear1}
  \dot x(t) = \frac{d f}{d x}(x_0) x(t) + P u(t)
\end{equation}
for $u  \in C(\R,\R^n)$. It is not yet fully understood how complexity of \eqref{eq:nonlinear1} might be induced from local activity of \eqref{eq:linear1}. To illustrate this, consider the simplest situation for $n=1$ and $P = 1 \in \R^{1 \times 1}$. Note that with the abbreviation $\lambda \coloneqq \tfrac{d f}{d x}(x_0)$ we get 
$x(t) = \int_0^t e^{\lambda(t-s)} u(s) \,ds$ as the solution of \eqref{eq:linear1}, $x(0)=0$, and hence \eqref{eq:linear1} is locally active if for some $u : [0,T] \rightarrow \R$ with $T > 0$ the inequality
\begin{equation}\label{eq:local-activity-simple}
  \int_0^T  \int_0^t e^{\lambda(t-s)} u(s) \,ds u(t) \,dt < 0
\end{equation}
holds. In Theorem \ref{thm:local-activity-allport} we showed that this is equivalent to the condition $\lambda > 0$. In Example \ref{ex:real}, we have seen that if $\lambda > 0$ then \eqref{eq:local-activity-simple} is satisfied for any $T > 0$ with $u(t) \coloneqq \rho'(2t/T - 1) e^{\lambda t}$ for $t \in [0,T]$, where $\rho\in C_{c}^{\infty}(\mathbb{R})$ denotes the Friedrichs mollifier \eqref{eq:mollifier}. The differential equation \eqref{eq:nonlinear1} for $n=1$ and $P=1$ takes the form
\begin{equation}\label{eq:scalar-nonlinear}
   \dot x(t) = f(x(t)) - D(x(t))
\end{equation}
and by the theorem of linearized asymptotic stability \cite[Theorem 2.77, p.\ 183]{Chicone2006}, the equilibrium $x_0$ of \eqref{eq:scalar-nonlinear} is asymptotically stable if its linearization
\begin{equation}\label{eq:scalar-linear}
   \dot x(t) = \Big[\frac{df}{dx}(x_0) - \frac{d D}{dx} (x_0)\Big]x(t)
\end{equation}
is exponentially stable, i.e.\ if $\gamma \coloneqq \frac{df}{dx}(x_0) - \frac{d D}{dx}(x_0) < 0$ and unstable if $\gamma > 0$. It might therefore happen that the linear differential equation $\dot x(t) = \frac{df}{dx}(x_0) x(t)$ is stable, i.e.\ $\lambda < 0$, whereas the full linearization \eqref{eq:scalar-linear} which also involves the $D$-term is unstable, i.e.\ $\gamma > 0$. In summary, in the scalar case with $P=1$, \eqref{eq:linear1} is locally passive if and only if $\lambda \leq 0$ and in this case it might still happen that \eqref{eq:scalar-linear} is unstable, namely if $\frac{d D}{dx} (x_0) < \lambda \leq 0$.

An interesting and counter-intuitive case occurs for $n \geq 2$, if $x_0$ is an asymptotically stable equilibrium of the ``kinetic part'' $\dot x(t) = f(x(t))$ of \eqref{eq:scalar-nonlinear}, $-D$ is a ``diffusion'' or ``dissipation term'' with $\langle x, D(x) \rangle \geq 0$ for all $x \in \R^n$ and the equilibrium $x_0$ of \eqref{eq:scalar-linear} is unstable, i.e.\ a ``dissipation-induced destabilization'' occurs (for a discussion of this effect for various classes of differential equations see e.g.\ \cite{KirillovVerhulst10} and the references therein). Consider e.g.\ \eqref{eq:nonlinear1} for $n=2$, $P = I$, $f(x) = Ax$,
\begin{equation*}
   A = \begin{pmatrix}
      -1 & 10 \\ 0 & -2 
   \end{pmatrix}
   \qquad \text{and} \qquad
   D = \begin{pmatrix}
      1 & -1 \\ -1 & 1 
   \end{pmatrix}.
\end{equation*}
Then the symmetric matrix $-D$ is dissipative, since the eigenvalues of $D$ are $0$ and $2$. Moreover, $\dot x(t) = A x(t)$ is asymptotically stable, since the eigenvalues of $A$ are $-1$ and $-2$. However, \eqref{eq:scalar-linear} (or equivalently \eqref{eq:nonlinear1} due to linearity) is of the form $\dot x(t) = [A - D] x(t)$ which is unstable, since $A - D$ has a positive eigenvalue.

We briefly recall two examples from \cite{Chua05, ItohChua07}, a FitzHugh--Nagumo equation with dissipation and a discrete reaction-diffusion equation, before we propose a ``linearization'' scheme and related open questions on local activity and complexity.

\begin{example} [FitzHugh--Nagumo equation with dissipation {\cite{ItohChua07}}]\label{ex:FitzNugh-Nagumo}
Consider the FitzHugh--Nagumo equation with a dissipation term 
\begin{equation}\label{eq:FitzNugh-Nagumo}
  \begin{array}{rcl}\displaystyle
  \frac{dx}{dt} & = & -y - f(x) - \mu x
\\[2ex] \displaystyle
  \frac{dy}{dt} & = & \xi(x - \beta y + \gamma)
  \end{array}
\end{equation}
with $f(x) = (x^3/3) - x$, $\beta = 1.28$, $\gamma = 0.12$, $\xi = 0.1$ and a dissipation coefficient $\mu > 0$. For small $\mu$ equation \eqref{eq:FitzNugh-Nagumo} has an equilibrium $(x_d, y_d) = (x_d(\mu), y_d(\mu))$ which undergoes a Hopf bifurcation
at $\mu \approx 0.05$ with $(x_d, y_d) \approx (-0.9083, -0.6159)$ \cite[Section 4.1]{ItohChua07}.

In \cite[Section 4.1]{ItohChua07} for an arbitrary solution $(x(t),y(t))$ of \eqref{eq:FitzNugh-Nagumo} the dissipation term $-\mu x(t)$ is interpreted as and replaced by an \emph{input term} $-\mu x_d(\mu) + \delta i(t)$ for a general \emph{input function} $t \mapsto \delta i(t)$ and we arrive at an associated family of \emph{forced FitzHugh--Nagumo equations}
\begin{equation}\label{eq:FitzNugh-Nagumo2}
  \begin{array}{rcl}\displaystyle
  \frac{dx}{dt} & = & -y - f(x) - \mu x_d(\mu) + \delta i(t)
\\[2ex] \displaystyle
  \frac{dy}{dt} & = & \xi(x - \beta y + \gamma)
  \end{array}
\end{equation}
with input functions $\delta i \in C(\R_{\geq 0},\R)$.

Approximating $f$ by its Taylor expansion of order 1 in the equilibrium $(x_d, y_d)$ of \eqref{eq:FitzNugh-Nagumo} yields an associated class of linear differential equations in $(\delta x, \delta y)$-variables
\begin{equation}\label{eq:FitzNugh-Nagumo3}
  \begin{array}{rcl}\displaystyle
  \frac{d(\delta x)}{dt} & = & \displaystyle - \delta y - \frac{df\big( x_d(\mu) \big)}{dx} \delta x + \delta i(t)
\\[2ex] \displaystyle
  \frac{d(\delta y)}{dt} & = & \xi(\delta x - \beta \delta y)
  \end{array}
\end{equation}
with $\frac{df( x_d(\mu) )}{dx} = x_d(\mu)^2 - 1$ and $\delta i \in C(\R_{\geq 0},\R)$.
\end{example}

\begin{example}[Discrete reaction-diffusion equation {\cite[Equation (1)]{Chua05}}]\label{ex:reaction-diffusion}
Consider the discrete reaction-diffusion equation given on an integer grid $Z \subseteq \Z^d$ for $d \in \{1,2,3\}$, i.e.\ for every ${\mathbf r} \in Z$ we have
\begin{equation}\label{eq:discr-reaction-diffusion}
  \begin{array}{rcl}\displaystyle
    \frac{d V_1({\mathbf r})}{dt} & = & f_1(V_1({\mathbf r}), \dots, V_n({\mathbf r})) + D_1 \nabla^2 V_1({\mathbf r})
  \\
    & \vdots
  \\ \displaystyle
    \frac{d V_m({\mathbf r})}{dt} & = & f_m(V_1({\mathbf r}), \dots, V_n({\mathbf r})) + D_m \nabla^2 V_m({\mathbf r})
  \\[1ex] \displaystyle
    \frac{d V_{m+1}({\mathbf r})}{dt} & = & f_{m+1}(V_1({\mathbf r}), \dots, V_n({\mathbf r}))
  \\
    & \vdots
  \\ \displaystyle
    \frac{d V_n({\mathbf r})}{dt} & = & f_n(V_1({\mathbf r}), \dots, V_n({\mathbf r}))
  \end{array}
\end{equation}
where $V_1({\mathbf r}), \dots, V_n({\mathbf r})$ denote the state variables of a ``reaction cell'' located at the grid point ${\mathbf r} \in Z$. $D_1, \dots, D_m > 0$ denote the diffusion coefficients associated with the first $m$ state variables and $\nabla^2 V_i({\mathbf r})$ for $i = 1, \dots, m$, denotes the discretized Laplace operator on $Z$. E.g.\ for $d=2$ write ${\mathbf r} = (j,k) \in Z \subset \Z^2$, then $\nabla^2 V_i(j,k) \coloneqq V_i(j+1,k) + V_i(j-1,k) + V_i(j,k+1) + V_i(j,k-1) - 4V_i(j,k)$ for $i = 1, \dots, m$. For an $N \times N$ array $Z \coloneqq \{1,2,\dots,N\}^2$ with $N \in \N$ one could impose Dirichlet, Neumann or toroidal boundary conditions on \eqref{eq:discr-reaction-diffusion} as described in \cite[Section 2]{ItohChua07}.

Note that any interaction between two cells in $Z$ can come only from the diffusion terms $D_i \nabla^2 V_i({\mathbf r})$ for $i=1,\dots,m$. With the abbreviations ${\mathbf V_a} \coloneqq (V_1,\dots,V_m)^\top$, ${\mathbf V_b} \coloneqq (V_{m+1},\dots,V_n)^\top$, ${\mathbf f_a} \coloneqq (f_1,\dots,f_m)^\top$, ${\mathbf f_b} \coloneqq (f_{m+1},\dots,f_n)^\top$, ${\mathbf D} \coloneqq \operatorname{diag} (D_1,\dots,D_m) \in \R^{m \times m}$, $\nabla^2 {\mathbf V_a} \coloneqq (\nabla^2 V_1,\dots,\nabla^2 V_m)^\top \in \R^m$, system \eqref{eq:discr-reaction-diffusion} can be rewritten as
\begin{equation}\label{eq:discr-reaction-diffusion2}
  \begin{array}{rcl}\displaystyle
    \dot {\mathbf V}_a({\mathbf r}) & = & {\mathbf f_a}({\mathbf V}_a({\mathbf r}), {\mathbf V}_b({\mathbf r}))
      + {\mathbf D} \nabla^2 {\mathbf V_a}({\mathbf r})
  \\[1ex] \displaystyle
    \dot {\mathbf V}_b({\mathbf r}) & = & {\mathbf f_b}({\mathbf V}_a({\mathbf r}), {\mathbf V}_b({\mathbf r}))
  \end{array}
\end{equation}
As in \cite[Section 5]{ItohChua07} consider one single cell in the plane, i.e.\ $Z = \{(1,1)\}$, under fixed (Dirichlet) boundary conditions $V_i(0,1)=V_i(1,0)=V_i(2,1)=V_i(1,2)=0$. Then the discrete Laplacian becomes $D_i \nabla^2 V_i(1,1) = -4 D_i V_i(1,1)$ and \eqref{eq:discr-reaction-diffusion2} on the single cell ${\mathbf r} = (1,1)$ simplifies to
\begin{equation}\label{eq:discr-reaction-diffusion2b}
  \begin{array}{rcl}\displaystyle
    \dot {\mathbf V}_a & = & {\mathbf f_a}({\mathbf V}_a, {\mathbf V}_b)
      + {\mathbf D} \nabla^2 {\mathbf V_a}
  \\[1ex] \displaystyle
    \dot {\mathbf V}_b & = & {\mathbf f_b}({\mathbf V}_a, {\mathbf V}_b)
  \end{array}
\end{equation}
(compare with \cite[Formula (109)]{ItohChua07} in case $D_i \coloneqq \mu/4$, $i = 1, \dots,m$, for some $\mu > 0$).

Assume that \eqref{eq:discr-reaction-diffusion2b} has an equilibrium $(\overline{{\mathbf V}}_a, \overline{{\mathbf V}}_b)$ and define $\overline{{\mathbf I}}_a \coloneqq {\mathbf D} \nabla^2 \overline{{\mathbf V}}_a$. Similar as in \cite{Chua05, ItohChua07} for an arbitrary solution $({\mathbf V}_a(t),{\mathbf V}_b(t))$ of \eqref{eq:discr-reaction-diffusion2b} the diffusion term ${\mathbf D} \nabla^2 {\mathbf V_a}(t)$ is interpreted as and replaced by an \emph{interaction term} $\overline{{\mathbf I}}_a + {\mathbf i}_a(t)$ for a general \emph{input function} $t \mapsto {\mathbf i}_a(t) \in \R^m$ and we arrive at an associated family of \emph{forced cell kinetic equations}
\begin{equation}\label{eq:discr-reaction-diffusion3}
  \begin{array}{rcl}\displaystyle
    \dot {\mathbf V}_a & = & {\mathbf f_a}({\mathbf V}_a, {\mathbf V}_b)
      + \overline{{\mathbf I}}_a + {\mathbf i}_a(t)
  \\[1ex] \displaystyle
    \dot {\mathbf V}_b & = & {\mathbf f_b}({\mathbf V}_a, {\mathbf V}_b)
  \end{array}
\end{equation}
with input functions ${\mathbf i}_a \in C(\R_{\geq 0},\R^m)$.

Approximating $({\mathbf f_a},{\mathbf f_b})$ by its Taylor expansion of order 1 in the equilibrium $(\overline{{\mathbf V}}_a, \overline{{\mathbf V}}_b)$ of \eqref{eq:discr-reaction-diffusion2b} yields an associated class of linear differential equations
\begin{equation}\label{eq:discr-reaction-diffusion4}
  \begin{array}{rcl}\displaystyle
    \dot {\mathbf v}_a & = & A_{11} {\mathbf v}_a +A_{12} {\mathbf v}_b + {\mathbf i}_a(t)
  \\[1ex] \displaystyle
    \dot {\mathbf v}_b & = & A_{21} {\mathbf v}_a +A_{22} {\mathbf v}_b
  \end{array}
\end{equation}
with ${A_{11} \;\; A_{12} \choose A_{21} \;\; A_{22}} = \frac{\partial ({\mathbf f_a},{\mathbf f_b})}{\partial ({\mathbf V}_a,{\mathbf V}_b)}(\overline{{\mathbf V}}_a, \overline{{\mathbf V}}_b)$ and ${\mathbf i}_a \in C(\R_{\geq 0},\R^m)$.
\end{example}

Examples \ref{ex:FitzNugh-Nagumo} and \ref{ex:reaction-diffusion} follow the same scheme of starting with a nonlinear differential equation and then associating a class of linear systems \eqref{eq:FitzNugh-Nagumo3} and \eqref{eq:discr-reaction-diffusion4}, respectively, for which local activity can be checked depending on the parameters. To formalize this scheme let $n \in \N$, $P \in \R^{n \times n}$ be a projection and let $f, D : \R^n \rightarrow \R^n$ be $C^1$ functions with $f(x_0) - P D(x_0) = 0$ for some $x_0 \in \R^n$ and $\langle x, D(x) \rangle \geq 0$ for all $x \in \R^n$. Consider the following 

\paragraph{(A) Differential equation with dissipation or diffusivity term}
\begin{equation}\label{eq:nonlinear}
  \dot x(t) = f(x(t)) - P D(x(t)).
\end{equation}
In case $P$ projects to some of the coordinate components of $x = (x_1, \dots, x_n)$ then those variables are called \emph{port variables} \cite[Section 1]{Chua05}. 

Example \ref{ex:FitzNugh-Nagumo}, equation \eqref{eq:FitzNugh-Nagumo}, is of the form \eqref{eq:nonlinear} with $n=2$, $P = {1 \; 0 \choose 0 \; 0}$, $f(x) = (x_1 -x_2 - x_1^3/3,\xi(x_1 - \beta x_2 + \gamma))^\top$, $D(x) = \mu x$, and the first component $x_1$ is a port variable.

Example \ref{ex:reaction-diffusion}, equation \eqref{eq:discr-reaction-diffusion2b}, is of the form \eqref{eq:nonlinear} with the projection matrix $P = \operatorname{diag}(1, \dots, 1, 0, \dots, 0) \in \R^{n \times n}$ which projects on the first $m$ components $(x_1,\dots,x_m)^\top = {\mathbf V}_a$ of $x = (x_1,\dots,x_n)^\top = ({\mathbf V}_a,{\mathbf V}_b)^\top \in \R^n$, $f(x) = ({\mathbf f_a}({\mathbf V}_a, {\mathbf V}_b),{\mathbf f_b}({\mathbf V}_a, {\mathbf V}_b))^\top$, $D(x) = (-{\mathbf D} \nabla^2 {\mathbf V_a}, 0)^\top$ and the first $m$ components ${\mathbf V}_a$ of $x$ are port variables.

In a next step the term $- P D(x)$ is replaced by a general perturbation of $- P D(x_0)$, i.e.\ \eqref{eq:nonlinear} is replaced by an associated class of differential equations.

\paragraph{(B) Associated class of perturbed differential equations}

\begin{equation}\label{eq:nonlinear2}
  \dot x(t) = f(x(t)) - P D(x_0) + Pu(t)
\end{equation}
for arbitrary \emph{perturbations} $u$ in a given subset $\mathcal I$ of the space of locally integrable functions $u : \R \rightarrow \R^n$.

Example \ref{ex:FitzNugh-Nagumo}, equation \eqref{eq:FitzNugh-Nagumo2}, is of the form \eqref{eq:nonlinear2} with $u = (\delta i,0)^\top$.

Example \ref{ex:reaction-diffusion}, equation \eqref{eq:discr-reaction-diffusion3}, is of the form \eqref{eq:nonlinear2} with $u = ({\mathbf i}_a,0)^\top$.

Next we use the fact that $f(x_0) - P D(x_0) = 0$ and ``linearize'' \eqref{eq:nonlinear2} at the equilibrium $x_0$ of \eqref{eq:nonlinear} in the sense that $f(x(t))$ is replaced by its Taylor expansion of order 1 in $x_0$. 

\paragraph{(C) Associated class of linear differential equations}

\begin{equation}\label{eq:linear}
  \dot x(t) = \tfrac{d f}{d x}(x_0) x(t) + P u(t)
\end{equation}
for $u  \in \mathcal I$. 
Example \ref{ex:FitzNugh-Nagumo}, equation \eqref{eq:FitzNugh-Nagumo3}, and Example \ref{ex:reaction-diffusion}, equation \eqref{eq:discr-reaction-diffusion4}, are of the form \eqref{eq:linear}.

In step (B) a whole class of differential equations \eqref{eq:nonlinear2} is associated to a single differential equation \eqref{eq:nonlinear}. It would be interesting to answer the following question.
\begin{enumerate}
   \item[Q1:] How are the solutions of \eqref{eq:nonlinear} and \eqref{eq:nonlinear2} related? More precisely, characterize the set of perturbations $u$ for which those two systems are topologically conjugate (see e.g.\ \cite{Popescu2004} and the references therein).
\end{enumerate}

In step (C) a linearization of the ``kinetic'' part $\dot x(t) = f(x(t))$ at $x_0$ is applied to the class of nonautonomous equations \eqref{eq:nonlinear2} although $x_0$ is not necessarily an equilibrium of $f$ nor of the nonautonomous equation \eqref{eq:nonlinear2} (i.e.\ $f(x_0) - PD(x_0) + P u(t) = 0$ is not satisfied for all $t \in \R$). One could ask the following question.
\begin{enumerate}
   \item[Q2:] How are the solutions of \eqref{eq:nonlinear2} and \eqref{eq:linear} related? Are those equations topologically conjugate close to $x_0$ resp.\ $0 \in \R^n$?
\end{enumerate}

In Theorem \ref{thm:generic-local-activity} we have shown that if $P \neq 0$ then generically instability implies local activity. The following question arises.
\begin{enumerate}
   \item[Q3:] Characterize those locally active systems \eqref{eq:linear-P} which are asymptotically stable.
\end{enumerate}

Complexity encompasses definitely more phenomena than merely instability of equilibria which are the focus of this paper. Chua \cite[Theorem 3]{Chua05} proves a characterization of local activity for projections of the form $P = \diag(1,0,\dots,0)$ in terms of four properties of an appropriate Laplace transform, the so-called complexity function with respect to the ``\emph{input-signal-port}'' \cite[Formula (18)]{Chua05}. For the example of the FitzHugh-Nagumo equation \cite[Section 4.1]{ItohChua07}, which is described by \eqref{eq:FitzNugh-Nagumo} and its associated class of linear differential equations in $(\delta x, \delta y)$-variables \eqref{eq:FitzNugh-Nagumo3}, the complexity function is of the form \cite[Formulas (62) and (63)]{ItohChua07}
\[
   Y(s) = \frac{s^2 + (\xi \beta + x_d(\mu)^2 - 1)s + \xi \beta (x_d(\mu)^2 -1) + \xi}{s + \xi \beta}
\]
for those $s \in \C$ for which it is defined. According to Theorem 3 in \cite{Chua05}, system \eqref{eq:FitzNugh-Nagumo3} is locally active if and only if at least one of the following conditions holds:
\begin{itemize}
   \item[(i)] $Y(s)$ has a pole in $\Re[s] > 0$.
   
   \item[(ii)] $Y(s)$ has a multiple pole on the imaginary axis.
   
   \item[(iii)] $Y(s)$ has a simple pole $s = i \omega_p$ on the imaginary axis and 
      $\lim_{s \to i \omega_p} (s - i\omega_p) Y(s)$ is either a negative real number, or a complex number. 
   
   \item[(iv)] $\Re[Y(i\omega)] < 0$ for some $\omega \in (-\infty, \infty)$.
\end{itemize}
Chua calls a differential equation with corresponding complexity function $Y(s)$ at the \emph{edge of chaos} if condition (iv) is satisfied and conditions (i), (ii) and (iii) are not satisfied.  A system at the edge of chaos does not necessarily show complexity, but a lack of the edge of chaos property is an obstruction to the emergence of complexity (e.g., the emergence of non-homogeneous static or dynamic patterns) \cite{Chua98, Chua05}. Hence the following informal questions need to be formulated more precisely and related to the concept of local activity.
\begin{enumerate}
   \item[Q4:] What is the generalization of Theorem 3 in \cite{Chua05}, i.e.,
      how can the concept of local activity be characterized by the complexity function for 
      arbitrary projections $P$ on a Hilbert space?
   \item[Q5:] How can one characterize edge of chaos for other classes of systems like 
      lattice dynamical systems or 1-dimensional cellular automata, 
      e.g.\ defined by a ring of binary cells, and what does it mean for the emergence 
      of non-homogeneous static or dynamic patterns?
\end{enumerate}

Of course this list of questions is by no means complete and only intended to fuel a fruitful discussion on the relation between local activity and the emergence of complexity.

\section{Appendix}\label{sec:appendix}

\subsection{Genericity}\label{subset:genericity}

The matrices in ${\R}^{n \times n}$ for which several eigenvalues attain the maximum real part form a closed codimension-one manifold-like object, in mathematical terms, a closed semi-algebraic subvariety of codimension one  \cite[\S30.H]{Arnold1988}. As a consequence, generically, only one real eigenvalue or only one pair of complex conjugate eigenvalues attains the maximum or dominating real part. In the next lemma we show in addition that it is also generic that a matrix is invertible and has distinct eigenvalues.

\begin{lemma}[Genericity of non-singular matrices with separated spectrum]\label{thm:matrix_generic}
The set ${\cal N}$ of invertible matrices with distinct eigenvalues and an eigenvalue with dominating real part, i.e.
\begin{multline*}
 {\cal N}\coloneqq \{A \in \R^{n \times n} \,:\, 0 \not\in \sigma(A),\\ \exists \lambda \in \sigma(A): \Re \lambda>\max\big(\Re(\sigma(A)\setminus \{\lambda,\overline{\lambda}\})\big) \text{ and } |\sigma(A)|=n \}
\end{multline*}
is open and dense in $\R^{n \times n}$.
\end{lemma}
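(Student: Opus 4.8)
The plan is to prove the three announced properties separately --- non-singularity, simplicity of the spectrum, and existence of an eigenvalue with strictly dominating real part --- and then intersect. Each of these is the complement of a semi-algebraic set cut out by polynomial conditions, so openness and density will follow from the standard fact that a proper algebraic subvariety of $\R^{n\times n}$ has empty interior and dense complement, and that a finite intersection of open dense sets is open and dense.

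First I would handle non-singularity: $\{A : 0 \in \sigma(A)\} = \{A : \det A = 0\}$ is the zero set of the (non-zero) polynomial $\det$, hence closed with dense complement. Next, the matrices with a repeated eigenvalue are exactly the zero set of the discriminant of the characteristic polynomial $\chi_A$, which is again a non-zero polynomial in the entries of $A$ (it does not vanish identically, since e.g.\ $\diag(1,2,\dots,n)$ has distinct eigenvalues); so $\{A : |\sigma(A)| < n\}$ is closed and its complement open and dense. Equivalently one may invoke that the set of diagonalizable matrices with distinct eigenvalues is open and dense, a classical fact.

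The slightly more delicate condition is the existence of a strictly dominating eigenvalue, i.e.\ a $\lambda \in \sigma(A)$ with $\Re\lambda > \Re\mu$ for every $\mu \in \sigma(A)\setminus\{\lambda,\overline\lambda\}$. Here the idea is to show that the complementary ``bad'' set --- matrices for which the maximal real part is attained by two eigenvalues that are not complex conjugates of each other --- is contained in a proper semi-algebraic subvariety of codimension one, which is the content of the discussion preceding the lemma and of \cite[\S30.H]{Arnold1988}; alternatively one can argue directly by density. The cleanest self-contained route: given any $A$ and any $\eps>0$, perturb $A$ within $\eps$ to a matrix $A'$ with $n$ distinct eigenvalues (using the previous paragraph), list its eigenvalues $\mu_1,\dots,\mu_n$, and then apply a further arbitrarily small perturbation that pushes one eigenvalue of maximal real part slightly to the right of all the others --- e.g.\ conjugate $A'$ by a matrix close to the identity to reach a matrix in companion-like form and nudge the relevant coefficient, or use analytic dependence of simple eigenvalues on the entries to perturb a single $\mu_j$ of maximal real part by adding a small positive quantity to its real part without destroying distinctness. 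This shows density; openness follows because, on the open set where all eigenvalues are simple, each eigenvalue depends continuously (indeed analytically) on $A$, so the strict inequalities defining the dominating-eigenvalue condition are preserved under small perturbations.

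The main obstacle I anticipate is making the dominating-eigenvalue perturbation rigorous while respecting the symmetry that complex eigenvalues of a real matrix come in conjugate pairs: one must move either a real eigenvalue or an entire conjugate pair, and verify that this can be done by a real perturbation of $A$ of arbitrarily small norm. This is where continuity/analyticity of simple eigenvalues and the fact that the perturbation can be chosen to preserve realness of $A$ (e.g.\ by perturbing in the direction of a suitable rank-one real matrix, or by working with the real Jordan/Schur form) do the work; everything else is bookkeeping with polynomial identities and the elementary topology of proper algebraic sets.
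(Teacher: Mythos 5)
Your proposal is correct and reaches the stated conclusion, but it is organized rather differently from the paper's proof, so a comparison is worthwhile. You decompose $\mathcal{N}$ into three conditions and dispose of the first two (invertibility, simple spectrum) by the algebraic argument that the zero sets of $\det$ and of the discriminant of the characteristic polynomial are proper algebraic subvarieties, hence closed with dense complement; the paper does not argue this way at all. Instead it proves openness of $\mathcal{N}$ in one stroke via the continuity of $A \mapsto \sigma(A)$ in the Hausdorff metric, choosing $\delta$ smaller than half the minimal gap between distinct points of $\sigma(A)\cup\{0\}$ and half the real-part gap to the dominating eigenvalue, so that all three defining conditions persist under perturbation; and it proves density in one stroke by passing to the Jordan normal form and adding the diagonal matrix $D_\delta = \diag(\delta,\dots,\delta^{k},\delta^{k+1},\delta^{k+1},\dots,\delta^{k+\ell},\delta^{k+\ell})$, whose distinct powers of $\delta$ simultaneously separate all eigenvalues, push them off $0$, and single out a strictly dominating one, while the repeated entries on the two slots of each conjugate pair keep the resulting matrix real. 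Your two-stage density argument for the dominating-eigenvalue condition (first make the spectrum simple, then nudge one eigenvalue or conjugate pair of maximal real part by a small real perturbation built from the real Jordan form) is the same underlying idea executed in two steps rather than one, and the obstacle you correctly flag --- keeping the perturbation real while moving only one eigenvalue or one conjugate pair --- is exactly what the paper's choice of $D_\delta$ resolves, so your sketch does close. What your route buys is a cleaner conceptual decomposition and an explicit ``proper subvariety'' justification for the first two conditions; what the paper's route buys is a single explicit perturbation certifying membership in $\mathcal{N}$ and an openness argument that does not need to restrict first to the simple-spectrum locus.
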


\begin{proof}
$(i)$ ${\cal N}$ is open: The statement rests on the fact that $\sigma\colon\R^{n\times n}\ni A\mapsto \sigma(A)$ is a continuous function with respect to the Hausdorff metric, see \cite[Theorem 3.1.2, p.\ 45]{Ortega1987}. Let $A \in {\cal N}$ with $\lambda_1\in \sigma(A)$ be such that $\Re \lambda_1=\max(\Re\sigma(A))$. Denote $\delta_1 \coloneqq \min \{ |\lambda - \mu| \,:\, \lambda, \mu \in \sigma(A)\cup\{0\},\lambda\ne\mu \} > 0$ and $\delta_2\coloneqq \dist\Big(\Re\lambda_1,\big(\Re(\sigma(A)\setminus \{\lambda_1,\overline{\lambda_1}\})\big)\Big)>0$ as well as $\delta\coloneqq \min\{\delta_1,\delta_2\}/2$.  By the continuity of $\sigma$, the set 
\[
   \sigma^{-1}\Big[ B_\textnormal{H}(\sigma(A),\delta)\Big]
\]
is open, contains $A$, and, by the choice of $\delta$, the set is a subset of $\mathcal N$, where $B_\textnormal{H}(\sigma(A),\delta)$ denotes the ball containing all compact sets $K\subset \mathbb C$ such that $d_\textnormal{H}(\sigma(A),K)<\delta$ ($d_\textnormal{H}$ the Hausdorff distance). For the latter we observe the following. Let $B\in \R^{n\times n}$ such that $d_\textnormal{H}(\sigma(A),\sigma(B))<\delta$. Then for each $\lambda\in \sigma(A)$ there is a unique $\mu\in\sigma(B)$ with $|\lambda-\mu|<\delta$. The existence follows from the definition of the Hausdorff distance. For the uniqueness note that  $B(\lambda,\delta)\cap B(\kappa,\delta)=\emptyset$ for all $\lambda,\kappa \in \sigma(A)\cup \{0\}$, $\lambda\ne \kappa$. Hence, $n\le|\sigma(B)|\le n$ and $0\notin \sigma(B)$. Let $\mu_1\in \sigma(B)$ be such that $|\lambda_1-\mu_1|< \delta$. For $\mu \in \sigma(B)\setminus \{\mu_1,\overline{\mu_1}\}$, take $\lambda\in \sigma(A)\setminus \{\lambda_1,\overline{\lambda_1}\}$ with $|\lambda-\mu|<\delta$. From $\Re \lambda_1 \ge \Re \lambda+2\delta$, we obtain
\[
  \Re \mu_1 > \Re \lambda_1 -\delta \ge \Re \lambda + \delta > \Re \mu.
\]
Hence, $B\in \mathcal{N}$.

$(ii)$ ${\cal N}$ is dense: Let $B \in \R^{n \times n}$ and $\eps > 0$. We construct an $A \in {\cal N}$ with $\|A - B\| < \eps$. Let $\lambda_1, \dots, \lambda_k$ and $a_1 \pm i b_1, \dots, a_\ell \pm i b_\ell$ be the real and complex conjugate pairs of eigenvalues of $B$ counted with multiplicities, i.e.\ $k + 2\ell = n$. Let $J_0 = T^{-1} B T$ be the Jordan normal form of $B$ for some $T \in \R^{n \times n}$. For $\delta \geq 0$, define a diagonal matrix by letting 
$$D_\delta = {\mbox{diag}}\big(\delta,\ldots,\delta^k,\delta^{k+1},\delta^{k+1},\ldots,\delta^{k+\ell},\delta^{k+\ell}\big)$$
and set $J_\delta = J_0 + D_\delta$. In particular, the off--diagonal $0$'s and $1$'s of $J_0$ remain unaltered. For $\delta > 0$, please note that $J_\delta$ is not the Jordan normal form of $J_0 + D_\delta$. The real eigenvalues of $J_\delta$ are $\lambda_1 + \delta,\ldots,\lambda_k + \delta^k$ and the complex conjugate pairs of eigenvalues of $J_\delta$ are $a_1 + \delta^{k+1} \pm i b_1, \ldots, a_\ell + \delta^{k+\ell} \pm i b_\ell$. 
Note that $J_\delta \in {\cal N}$ for $\delta > 0$ small enough. Using the fact that the map $\delta \mapsto T J_\delta T^{-1}$ is continuous, we can choose $\delta > 0$ such that $A \coloneqq T J_\delta T^{-1}$ satisfies $A \in {\cal N}$ and $\|A - B\| < \eps$.
\end{proof}

For $A \in {\cal N}$, let $G(A) \in \C^{n \times n}$ denote a matrix which conjugates $A$ into (complex) Jordan normal form $J(A) = G(A)^{-1} A G(A) = \diag(\lambda_1, \dots, \lambda_n) \in \C^{n \times n}$ such that $\lambda_1$ is an eigenvalue with largest real part, i.e.\ $\Re(\lambda_1) = \max (\Re (\sigma(A)))$. Note that the columns of $G(A)$ are formed by the eigenvectors of $A$ and, since $A \in \R^{n \times n}$, the eigenvectors corresponding to a complex conjugate pair of eigenvalues of $A$ are also complex conjugate. Using the fact that the eigenvectors of a matrix $A \in {\cal N}$ with distinct eigenvalues depend continuously on the entries of $A$ (see e.g.\ \cite[Ch.\ 2, \S 5.3, p.\ 110]{Kato2012} or \cite[Theorem 3.1.3, p.\ 45]{Ortega1987}), $G(A)$ depends continuously on $A$. With the abbreviations $G(A) = (g_{ij}^A)_{i,j = 1,\dots,n}$, $H(A) \coloneqq G(A)^{-1} = (h_{ij}^A)_{i,j = 1,\dots,n}$, we show that generically $g_{11}^A h_{11}^A \neq 0$.

\begin{lemma}\label{cor:product_generic_nonzero}
The set 
\begin{equation*}
  {\cal M} \coloneqq
  \{A \in \mathcal{N} \,:\, g_{11}^A h_{11}^A \neq 0 \}
\end{equation*}
is open and dense in $\R^{n \times n}$.
\end{lemma}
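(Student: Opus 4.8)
The plan is to derive both assertions from Lemma~\ref{thm:matrix_generic} together with a local perturbation argument. \emph{Openness} is immediate: on $\mathcal N$ the map $A\mapsto G(A)$ is continuous (as recorded in Appendix~\ref{subset:genericity}), hence so are $A\mapsto H(A)=G(A)^{-1}$ and $A\mapsto g_{11}^Ah_{11}^A$; therefore $\mathcal M$ is relatively open in the open set $\mathcal N$, and hence open in $\R^{n\times n}$. For \emph{density}, since $\mathcal N$ is dense by Lemma~\ref{thm:matrix_generic}, it suffices to approximate an arbitrary $A\in\mathcal N$ by matrices in $\mathcal M$.

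I would first record the algebraic meaning of the quantity in question. Writing $v$ for the first column of $G(A)$ (a right eigenvector of $A$ for the eigenvalue $\lambda_1$ of largest real part) and $w^\top$ for the first row of $H(A)$ (the corresponding left eigenvector, normalised so that $w^\top v=1$), the rank-one spectral projection associated with $\lambda_1$ is $P_1=vw^\top$, and $g_{11}^Ah_{11}^A=(P_1)_{11}=v_1w_1$, a quantity independent of the chosen scaling of $v$ and $w$. If $v_1w_1\neq 0$ there is nothing to do, so assume $v_1w_1=0$. The key device is to move $A$ along the one-parameter family of conjugations $A_t\coloneqq(I+tE)A(I+tE)^{-1}$, where $E$ will be a sum of at most two matrix units $E_{pq}$ (the matrix with entry $1$ in position $(p,q)$ and zeros elsewhere) and $|t|$ is small. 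Since $A_t$ is similar to $A$ it has the same spectrum, hence lies in $\mathcal N$, and $P_1(A_t)=(I+tE)\,P_1\,(I+tE)^{-1}$ \emph{exactly}; consequently $t\mapsto (P_1(A_t))_{11}=\e_1^\top(I+tE)P_1(I+tE)^{-1}\e_1$ is a rational function of $t$, analytic near $0$, with value $0$ at $t=0$. It then remains only to choose $E$ so that this function is not identically zero; for such $E$ it is nonzero for all sufficiently small $t\neq 0$, giving $A_t\in\mathcal M$ with $A_t\to A$, which completes the density proof.

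The choice of $E$ is by cases. If $v_1=0\neq w_1$, pick $j\geq 2$ with $v_j\neq 0$ and take $E=E_{1j}$; using $\e_1^\top E_{1j}=\e_j^\top$ and $E_{1j}\e_1=0$ one computes $\frac{d}{dt}\big|_{t=0}(P_1(A_t))_{11}=(EP_1-P_1E)_{11}=v_jw_1\neq 0$, so the rational function is nontrivial. The case $v_1\neq 0=w_1$ is symmetric, with $E=E_{k1}$ for some $k\geq 2$ with $w_k\neq 0$, giving derivative $-v_1w_k\neq 0$.

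The remaining, and I expect main, obstacle is the degenerate case $v_1=w_1=0$: here $\e_1^\top P_1=v_1w^\top=0$ and $P_1\e_1=w_1v=0$, so the first-order term $(EP_1-P_1E)_{11}$ vanishes for \emph{every} $E$, and one must pass to second order. Here I would take $E=E_{1j}+E_{k1}$ with $j,k\geq 2$, $v_j\neq 0$, $w_k\neq 0$; then $\e_1^\top E=\e_j^\top$ and $E\e_1=\e_k$, and the $t^2$-coefficient of $(P_1(A_t))_{11}$ equals $(P_1E^2-EP_1E)_{11}=\e_1^\top P_1E^2\e_1-\e_1^\top EP_1E\e_1=0-(P_1)_{jk}=-v_jw_k\neq 0$, the first term dropping out because $\e_1^\top P_1=0$. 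Hence the rational function is again not identically zero. This exhausts all possibilities, so every $A\in\mathcal N$ is a limit of matrices in $\mathcal M$, and together with density of $\mathcal N$ this shows $\mathcal M$ is dense in $\R^{n\times n}$.
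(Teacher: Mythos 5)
Your proof is correct, and the density argument takes a genuinely different route from the paper's. Both proofs perturb by similarity so as to stay inside $\mathcal N$ (the defining conditions of $\mathcal N$ depend only on the spectrum), but the paper perturbs the eigenvector matrix directly, setting $B_\Delta=(G(B)+\Delta)J(B)(G(B)+\Delta)^{-1}$, writing $h_{11}$ via Cramer's rule as $\det(G_1(B))/\det(G(B))$, and invoking density of invertible matrices to choose a single reality-compatible $\Delta$ making both $g_{11}^{B_\Delta}$ and $\det(G_1(B_\Delta))$ nonzero. You instead observe that $g_{11}^Ah_{11}^A$ is the $(1,1)$ entry of the similarity-covariant spectral projection $P_1=vw^\top$, conjugate by $I+tE$ for explicitly chosen real matrix units, and show the resulting rational function of $t$ has a nonvanishing first- or second-order Taylor coefficient at $t=0$. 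What your version buys is that the perturbation is real by construction (no need to match conjugate columns) and the nondegeneracy is verified by an explicit computation rather than a genericity appeal; the price is the case analysis, in particular the second-order expansion needed when $v_1=w_1=0$, a degenerate case the paper's argument does not have to isolate. Your identification $g_{11}^Ah_{11}^A=v_1w_1$ with $w^\top v=1$, and the covariance $P_1(A_t)=(I+tE)P_1(I+tE)^{-1}$, are both correct, and the computed coefficients $v_jw_1$, $-v_1w_k$, $-v_jw_k$ are indeed nonzero in the respective cases, so the argument is complete.
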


\begin{proof}
We show that ${\cal M}$ is open and dense in ${\cal N}$. Together with Lemma \ref{thm:matrix_generic}, it follows that ${\cal M}$ is also open and dense in $\R^{n \times n}$.

$(i)$ ${\cal M}$ is open: For $A \in {\cal N}$ the maps $A \mapsto G(A)$, as well as $A \mapsto H(A) = G(A)^{-1}$, are continuous. As a consequence, also the entries $g_{11}^A$ of $G(A)$ and $h_{11}^A$ of $H(A)$ depend continuously on $A$ and therefore the condition $g_{11}^A h_{11}^A \neq 0$ is open in ${\cal N}$.

$(ii)$ ${\cal M}$ is dense: Let $B \in {\cal N}$ and $\eps > 0$. Note that, by Cramer's rule, $h_{11}^B = \det(G_1(B))/\det(G(B))$ with $G_1(B)$ being the matrix formed by replacing the first column of $G(B)$ by the column vector $(1,0,\dots,0)^T$. For $\Delta \in \C^{n \times n}$, with $\|\Delta\|$ small enough, the matrix $B_\Delta \coloneqq (G(B) + \Delta) J(B) (G(B) + \Delta)^{-1}  \in \C^{n \times n}$ is well-defined and $G(B_\Delta) = G(B) + \Delta$. Note that $B_\Delta \in \R^{n \times n}$, if $\Delta$ is chosen such that two columns of $G(B) + \Delta$ are complex conjugate in case they correspond to a complex conjugate eigenvalue pair of $J(B)$. As $\sigma(B)=\sigma(B_\Delta)$, we obtain $B_\Delta \in \mathcal{N}$. In particular, for every $\delta > 0$ small enough, there exists a $\Delta$ with $\|\Delta\| \leq \delta$ such that $B_\Delta \in \mathcal N$, $g_{11}^{B_\Delta} \neq 0$ and, by the density of invertible matrices, also $\det(G_1(B_\Delta))\ne 0$. Consequently, $g_{11}^{B_\Delta} h_{11}^{B_\Delta} \neq 0$ and, thus, $B_\Delta  \in {\cal M}$. By choosing $\delta > 0$  small enough, we also obtain $\|B - B_\Delta\| < \eps$.
\end{proof}

\subsection{Local passivity and dissipativity}

A useful concept in electrical network theory is the notion of a port consisting of a pair of terminals and the current entering one of the terminals is always required to be equal to the current leaving the other terminal (see e.g.\ \cite{Csurgay1965} and also \cite{Zemanian1970} for an extension to the Hilbert space setting and a related notion of passivity). We formulate Chua's notion of local activity \cite[Definition 1]{Chua05} for a real or complex Hilbert space $H$, $A$ the generator of a $C_0$-semigroup $S$ in $H$, $P$ a projection on $H$ and for a fixed basis of $H$ the base elements in $\im P$ correspond to ports or port variables (see also Section \ref{sec:local-activity}). Consider the following class of differential equations
\begin{equation}\label{eq:linear-H}
  \dot x(t) = A x(t) + P u(t)
\end{equation}
with $u \in C(\R_{\geq 0}, H)$. Then $\R_{\geq 0} \ni t \mapsto x(t) = S(t)x_0 + \int_0^t S(t-s) u(s) \,ds \in H$ is the mild solution of the Cauchy problem \eqref{eq:linear-H}, $x(0) = x_0 \in H$.

\begin{definition}[Local activity, local passivity] The pair $(A,P)$, or equivalently equation \eqref{eq:linear-H}, is called \emph{locally passive} if for all $T>0$ and $u\in C([0,T],H)$ the (mild) solution $x\in C([0,T],H)$ of the initial value problem \eqref{eq:linear-H}, $x(0) = 0$, satisfies
\[
   W_T(u) \coloneqq \Re\int_0^T \langle x(t),Pu(t)\rangle \,dt\geq 0.
\]
Equation \eqref{eq:linear-H} is called \emph{locally active} if it is not locally passive, i.e.\ if there exist $T>0$ and $u\in C([0,T],H)$ such that $W_T(u) < 0$.
\end{definition}

\begin{proposition}[Sufficient condition for local passivity]\label{prop:dissipative}
 Let $A$ be the generator of a $C_0$-semigroup in the Hilbert space $H$, $P\in L(H)$ an orthogonal projection. If $\Re \langle PAx,x\rangle\leq 0$  for all $x\in D(A)$ then $(A,P)$ is locally passive.
\end{proposition}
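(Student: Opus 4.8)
The plan is to establish $W_T(u) \geq 0$ first for smooth inputs, where the mild solution is classical and one may differentiate, and then to extend to all continuous $u$ by density. First I would take $u \in C^1([0,T], H)$; since then $t \mapsto Pu(t)$ is $C^1$ and $x(0) = 0 \in D(A)$, the standard regularity theory for inhomogeneous Cauchy problems guarantees that the mild solution $x(t) = \int_0^t S(t-s)Pu(s)\,ds$ is in fact a classical one, i.e.\ $x \in C^1([0,T], H) \cap C([0,T], D(A))$ with $\dot x(t) = Ax(t) + Pu(t)$ for all $t \in [0,T]$.

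The core of the argument is then a pointwise lower bound on the integrand. From $\dot x = Ax + Pu$ one has $Pu = \dot x - Ax$, and applying $P$ (using $P^2 = P$) gives $Pu(t) = P\dot x(t) - PAx(t)$, hence $\Re\langle x(t), Pu(t)\rangle = \Re\langle x(t), P\dot x(t)\rangle - \Re\langle x(t), PAx(t)\rangle$. I would then observe that, because $P$ is self-adjoint, the last term equals $\Re\langle PAx(t), x(t)\rangle$, which is $\leq 0$ by the hypothesis applied to $y = x(t) \in D(A)$, while $\Re\langle x(t), P\dot x(t)\rangle = \Re\langle Px(t), \dot x(t)\rangle = \tfrac12\tfrac{d}{dt}\|Px(t)\|^2$ (again using $P = P^* = P^2$). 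This yields $\Re\langle x(t), Pu(t)\rangle \geq \tfrac12\tfrac{d}{dt}\|Px(t)\|^2$, and integrating over $[0,T]$ together with $x(0) = 0$ gives $W_T(u) \geq \tfrac12\big(\|Px(T)\|^2 - \|Px(0)\|^2\big) = \tfrac12\|Px(T)\|^2 \geq 0$.

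To finish, I would pass to a general $u \in C([0,T], H)$ by choosing $C^1$ functions $u_n \to u$ uniformly on $[0,T]$ (e.g.\ by mollification); since the map $u \mapsto W_T(u)$ is continuous with respect to uniform convergence — the associated mild solutions obey $\|x_n(t) - x(t)\| \leq (\sup_{s \in [0,T]}\|S(s)\|)\,\|P\|\,t\,\|u_n - u\|_\infty$, so $x_n \to x$ uniformly — one gets $W_T(u) = \lim_n W_T(u_n) \geq 0$. The only genuinely technical point is the upgrade of the mild solution to a classical solution for $C^1$ inputs, which is what makes $x(t) \in D(A)$ available so that the hypothesis on $\Re\langle PAx, x\rangle$ can be invoked; everything else reduces to the algebra of the orthogonal projection and the fundamental theorem of calculus.
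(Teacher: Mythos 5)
Your proposal is correct and rests on exactly the same computation as the paper's proof: writing $\Re\langle x(t),Pu(t)\rangle=\Re\langle Px(t),\dot x(t)\rangle-\Re\langle Px(t),Ax(t)\rangle$ via $P=P^{*}=P^{2}$, integrating the first term to $\tfrac12\|Px(T)\|^{2}$ and discarding the second by the dissipativity hypothesis. The only difference is that you make explicit the regularity step (classical solutions for $C^{1}$ inputs, then density and continuity of $W_T$) which the paper leaves implicit; this is a welcome clarification rather than a different route.
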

\begin{proof}
Let $T>0$ and $u\in C([0,T],H)$. Denote by $x$ the solution of 
\begin{equation*}
   \dot{x}(t) = Ax(t)+Pu(t), \qquad x(0) = 0.
\end{equation*}
Then, we compute
\begin{align*}
  \Re \int_0^T \langle x(t),Pu(t)\rangle \,dt & = \Re \int_0^T \langle Px(t),\dot{x}(t)\rangle \,dt -\Re \int_0^T \langle Px(t),Ax(t)\rangle \,dt \\ 
                                                    &\geq \frac{1}{2}|Px(T)|^2\geq 0.\qedhere
\end{align*}
\end{proof}

If $P=I$ also the reverse implication of Proposition \ref{prop:dissipative} holds.

\begin{theorem}[Characterization of local passivity for trivial projection]\label{thm:passivity}
  Let $A$ be the generator of a $C_0$-semigroup in the Hilbert space $H$. Then the following statements are equivalent.
\begin{enumerate}[(i)]
 \item $(A,I)$ is locally passive,
 \item $A$ is dissipative, i.e. $\Re\langle Ax,x\rangle \leq 0$ for all $x\in D(A)$.
\end{enumerate}
\end{theorem}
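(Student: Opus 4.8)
The plan is to prove the two implications separately. The direction $(ii)\Rightarrow(i)$ is essentially free, while $(i)\Rightarrow(ii)$ requires constructing an explicit input that makes $W_T$ negative, and here the natural trick is to prescribe the trajectory rather than the input.

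For $(ii)\Rightarrow(i)$ I would simply invoke Proposition \ref{prop:dissipative} with $P=I$: if $\Re\langle Ax,x\rangle\le 0$ for all $x\in D(A)$, then $(A,I)$ is locally passive.

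For $(i)\Rightarrow(ii)$ I would argue by contraposition. Assume $A$ is not dissipative, so there is $x_0\in D(A)$ with $c\coloneqq\Re\langle Ax_0,x_0\rangle>0$; I must produce $T>0$ and $u\in C([0,T],H)$ with $W_T(u)<0$. Fix any $T>0$ and a scalar function $\psi\in C^\infty([0,T],\R)$ with $\psi(0)=\psi(T)=0$ and $\psi\not\equiv 0$, positive on $(0,T)$ --- e.g.\ $\psi(t)\coloneqq\rho(2t/T-1)$ with the Friedrichs mollifier \eqref{eq:mollifier}. Set $\varphi(t)\coloneqq\psi(t)x_0$ and $u(t)\coloneqq\dot\psi(t)x_0-\psi(t)Ax_0$. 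Since $x_0\in D(A)$ the vector $Ax_0$ lies in $H$, so $u\in C([0,T],H)$ is an admissible input; moreover $\varphi\in C^1([0,T],H)$ with $\varphi(t)\in D(A)$, $\varphi(0)=0$ and $\dot\varphi(t)=\dot\psi(t)x_0=\psi(t)Ax_0+u(t)=A\varphi(t)+u(t)$, so $\varphi$ is a classical, hence the unique mild, solution of \eqref{eq:linear-H} with $x(0)=0$. Using $\Re\langle\varphi(t),\dot\varphi(t)\rangle=\tfrac12\frac{d}{dt}\|\varphi(t)\|^2$ together with $\varphi(0)=\varphi(T)=0$ and conjugate symmetry of the inner product, I then compute
\begin{align*}
  W_T(u)
  &=\Re\int_0^T\langle\varphi(t),\dot\varphi(t)-A\varphi(t)\rangle\,dt
  =\tfrac12\|\varphi(T)\|^2-\Re\int_0^T\langle\varphi(t),A\varphi(t)\rangle\,dt\\
  &=-\,\Re\langle Ax_0,x_0\rangle\int_0^T\psi(t)^2\,dt
  =-c\int_0^T\psi(t)^2\,dt<0,
\end{align*}
since $c>0$ and $\int_0^T\psi(t)^2\,dt>0$. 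Hence $(A,I)$ is locally active, which is the contrapositive of $(i)\Rightarrow(ii)$.

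The only point requiring care --- and the closest thing to an obstacle --- is the regularity bookkeeping: one must take $x_0$ in $D(A)$ (which is exactly what non-dissipativity provides), so that $Ax_0\in H$ and $u$ is genuinely a continuous input, and one must note that the smooth curve $\varphi$ coincides with the mild solution because it is in fact a classical solution and mild solutions are unique. Everything else reduces to a one-line integration by parts.
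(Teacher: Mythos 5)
Your proof is correct, and for the direction $(i)\Rightarrow(ii)$ it takes a genuinely more direct route than the paper. Both arguments ultimately rest on the same trick of prescribing the trajectory $\psi(t)x_0$ with $x_0\in D(A)$ and reading off the input $u=\dot\psi\,x_0-\psi Ax_0$, but the paper reaches this only after a detour: it first considers the $\rho$-shifted problem $\dot x=Ax+u-\rho x$ with $\rho>\omega$ (the growth bound of the semigroup), derives from local passivity the inequality $\Re\int_0^T\langle Ax(t),x(t)\rangle\,dt\le\tfrac12|x(T)|^2$ for all $u\in C_c^\infty(\R_{>0},H)$, uses the exponential weight to force $|x(T)|\to0$ and hence $\Re\int_0^\infty\langle Ax,x\rangle\,dt\le0$, and only then specializes to $x=\phi x_0$. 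You bypass the shift, the growth bound and the limit $T\to\infty$ entirely by choosing $\psi$ to vanish at \emph{both} endpoints, so the boundary term $\tfrac12\|\varphi(T)\|^2$ is exactly zero at finite $T$; this also lets you argue by contraposition and exhibit an explicit witness $u$ with $W_T(u)<0$, which is arguably more in the spirit of ``locally active''. What the paper's version buys is the intermediate integral inequality $\Re\int_0^\infty\langle Ax(t),x(t)\rangle\,dt\le0$, valid for a whole class of smoothly driven solutions, but that is not needed for the theorem itself. Your regularity bookkeeping (taking $x_0\in D(A)$ so that $u\in C([0,T],H)$, and identifying the classical solution $\varphi$ with the unique mild solution) is exactly the right care to take, and the $(ii)\Rightarrow(i)$ direction via Proposition \ref{prop:dissipative} coincides with the paper's.
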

\begin{proof}
$(ii) \Rightarrow (i)$. This was already shown in Proposition \ref{prop:dissipative}. 

$(i) \Rightarrow (ii)$. We denote by $S$ the semigroup generated by $A$. Let $u\in C_c^\infty(\mathbb{R}_{>0},H)$, $b\coloneqq \sup \spt u$ and $\rho>\omega$, where $\omega \in \R$ is such that $\|S(t)\|\leq Me^{\omega t}$ for every $t\geq 0$ and some $M\geq1$. We set
\begin{equation*}
 x(t)=\int_0^t S(t-s)e^{-\rho(t-s)} u(s) \,ds \quad(t\geq0),
\end{equation*}
i.e., $x$ is the unique (classical) solution of 
\begin{equation}\label{eq:Cauchy_prob}
    \dot{x}(t)=Ax(t)+u(t)-\rho x(t) 
    , \quad
    x(0)=0.
 \end{equation}
The local passivity of $(A,I)$ yields $\Re \intop_0^T \langle x(t), u(t)-\rho x(t)\rangle \mathrm{ d}t\geq 0$ for each $T>0$ and hence,
\begin{align*} 
   \rho \int_{0}^T |x(t)|^2 \mathrm{ d}t &\leq \Re\int_{0}^T\langle x(t),u(t)\rangle \mathrm{ d}t\\
                           &=\Re \int_{0}^T \langle x(t),\dot{x}(t)+ \rho x(t)-Ax(t)\rangle \mathrm{ d}t\\
                           & = \frac{1}{2}|x(T)|^2+\rho\int_{0}^T|x(t)|^2\mathrm{ d}t-\Re\int_{T_0}^T\langle Ax(t),x(t)\rangle \mathrm{ d}t.
\end{align*}
Therefore,
\[
   \Re\int_{0}^T\langle Ax(t),x(t)\rangle \mathrm{ d}t\leq \frac{1}{2}|x(T)|^2
   \]
   for every $T>0$ and since
\[
 |x(T)|\leq \intop_0^T M e^{(\omega-\rho)(T-s)} |u(s)|\mathrm{ d}s \leq Me^{(\omega-\rho)T} \intop_0^{b} e^{(\rho-\omega)s} \mathrm{ d}s \|u\|_\infty \to 0\quad (T\to \infty),
\]
we infer that
\begin{equation*}
   \Re\intop_0^\infty\langle Ax(t),x(t)\rangle\mathrm{ d}t \leq 0.
\end{equation*}
Let now $\phi\in C_c^\infty(\mathbb{R}_{>0}), \phi\ne 0$ and $x_0\in D(A)$. Then, $x\coloneqq \phi x_0$ solves (\ref{eq:Cauchy_prob}) for $u\coloneqq \phi'x_0+\rho\phi x_0-\phi Ax_0\in C_c^\infty(\mathbb{R}_{>0},H)$ and hence, 
\begin{equation*}
 0\geq \Re \intop_0^\infty \langle Ax(t),x(t)\rangle \mathrm{ d}t= \Re \langle Ax_0,x_0\rangle |\phi|_{L_2(\mathbb{R}_{\geq0})}^2. 
\end{equation*}
In consequence, we arrive at 
\[
 \Re \langle Ax_0,x_0\rangle\leq 0\quad (x_0\in D(A)).\qedhere
\]
\end{proof}

\section*{Acknowledgements}
B.G.~would like to acknowledge the hospitality of the Center for Dynamics and the Institute for Analysis at the Department of Mathematics, TU Dresden. S.S.~was in part supported by the German Research Foundation (DFG) within the Cluster of Excellence (EXC 1056) ``Center for Advancing Electronics Dresden'' (cfaed). M.W.~carried out this work with financial support of the EPSRC grant EP/L018802/2. This is gratefully acknowledged. We thank Leon Chua for helpful comments.

\end{document}